\def\dbar{\bar\partial}
\def\R{{\mathbb R}}
\def\C{{\mathbb C}}
\def\F{{\mathfrak F}}
\def\Ok{{\mathcal O}}
\def\Re{{\rm Re\,  }}
\newcommand{\Com}[1]{}
\def\be{\begin{equation}}
\def\ee{\end{equation}}
\newtheorem{thm}{Theorem}[section]
\newtheorem{lma}[thm]{Lemma}
\newtheorem{cor}[thm]{Corollary}
\newtheorem{prop}[thm]{Proposition}
\theoremstyle{definition}
\newtheorem{df}{Definition}
\theoremstyle{remark}
\newtheorem{preremark}{Remark}
\newtheorem{preex}{Example}
\newenvironment{remark}{\begin{preremark}}{\end{preremark}}
\newenvironment{ex}{\begin{preex}}{\end{preex}}
\numberwithin{equation}{section}
\begin{document}
\title[]{Log concavity for matrix-valued functions and a matrix-valued Pr\'ekopa theorem}

\author{Hossein Raufi}

\address{H. Raufi\\Department of Mathematics\\Chalmers University of Technology and the University of Gothenburg\\412 96 G\"OTEBORG\\SWEDEN}

\email{raufi@chalmers.se}

\begin{abstract}
We give two different definitions of what it means for a matrix-valued function, $g:\R^n\to\C^{r\times r}$, to be log concave, guided by similar notions in complex differential geometry. After discussing a few simple examples, we proceed to develop some of the basic properties associated with these new concepts. Finally, we prove a matrix-valued Pr\'ekopa theorem using a weighted, vector-valued Paley-Wiener theorem, and positivity properties of direct image bundles.
\end{abstract}

\maketitle

\section{Introduction}
\noindent In 1973 Pr{\'e}kopa, \cite{P}, proved the following celebrated theorem.

\begin{thm}
Let $\varphi:\R_t^m\times\R_y^n\to\R$ be convex and define $\tilde{\varphi}:\R^m\to\R$ through
$$e^{-\tilde{\varphi}(t)}=\int_{\R^n}e^{-\varphi(t,y)}dV(y).$$
Then $\tilde{\varphi}$ is convex.
\end{thm}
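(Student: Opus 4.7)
My plan is to reduce the statement to the Pr\'ekopa-Leindler inequality, the functional form of Brunn-Minkowski. Convexity of $\tilde{\varphi}$ is equivalent to the statement that for every $t_0,t_1\in\R^m$ and $\lambda\in(0,1)$,
$$\tilde{\varphi}(\lambda t_0+(1-\lambda)t_1)\leq \lambda\tilde{\varphi}(t_0)+(1-\lambda)\tilde{\varphi}(t_1),$$
and exponentiating this turns it into
$$\int_{\R^n}e^{-\varphi(\lambda t_0+(1-\lambda)t_1,\,y)}\,dV(y)\geq\Big(\int_{\R^n}e^{-\varphi(t_0,y)}\,dV(y)\Big)^{\lambda}\Big(\int_{\R^n}e^{-\varphi(t_1,y)}\,dV(y)\Big)^{1-\lambda}.$$
So the real task is to produce this multiplicative integral inequality.

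Next I would invoke the Pr\'ekopa-Leindler inequality: if $f,g,h:\R^n\to[0,\infty)$ satisfy $h(\lambda y_0+(1-\lambda)y_1)\geq f(y_0)^{\lambda}g(y_1)^{1-\lambda}$ for all $y_0,y_1\in\R^n$, then $\int h\geq(\int f)^{\lambda}(\int g)^{1-\lambda}$. Setting $f(y):=e^{-\varphi(t_0,y)}$, $g(y):=e^{-\varphi(t_1,y)}$, and $h(y):=e^{-\varphi(\lambda t_0+(1-\lambda)t_1,\,y)}$, the hypothesis reads, after taking $-\log$, precisely as the convexity of $\varphi$ along the segment joining $(t_0,y_0)$ and $(t_1,y_1)$. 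Pr\'ekopa-Leindler then yields the desired inequality at once.

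The real work, of course, sits inside Pr\'ekopa-Leindler itself, and this is where I expect the main obstacle of the classical route to lie. I would prove it by induction on $n$, with the inductive step a straightforward Fubini slicing in the last coordinate. The base case $n=1$ is the genuine heart of the matter and reduces to the one-dimensional Brunn-Minkowski inequality $|\lambda A+(1-\lambda)B|\geq\lambda|A|+(1-\lambda)|B|$ for measurable $A,B\subset\R$, which in turn follows from a translation/rearrangement of the sets so that they share an endpoint.

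Finally, let me sketch an alternative in the spirit of the paper. Complexify the parameter by writing $\tau=t+is\in\C^m$ and regard $\varphi(\Re\tau,y)$ as a plurisubharmonic function on $\C^m\times\R^n\subset\C^m\times\C^n$ that is independent of $s$; then $e^{-\tilde{\varphi}}$ appears as the norm of a section of the direct image of the trivial Hermitian line bundle with metric $e^{-\varphi}$ under projection onto the $\tau$-factor. Berndtsson's positivity theorem for direct images shows that $\tilde{\varphi}$ is plurisubharmonic in $\tau$, and since $\tilde{\varphi}$ does not depend on $s$ this is equivalent to convexity in $t$. This complex-geometric route is the one I expect will generalize to the matrix-valued setting of the paper, through the vector-valued Paley-Wiener theorem and positivity of higher-rank direct image bundles alluded to in the abstract.
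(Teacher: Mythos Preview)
Your proposal is correct. Note, though, that the paper does not give its own proof of this statement: it is quoted as Pr\'ekopa's 1973 theorem and serves only as motivation. The closest thing to ``the paper's proof'' is the special case $r=1$ of Theorem~\ref{thm:main}, whose argument runs through the weighted Paley--Wiener isometry (Theorem~\ref{thm:PW}) and the Nakano positivity of the direct-image bundle (Theorem~\ref{thm:B}); the paper itself remarks that this is parallel to the Brascamp--Lieb proof.

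Your primary route via Pr\'ekopa--Leindler is the classical one and is genuinely different from what the paper does. It is more elementary (one-dimensional Brunn--Minkowski plus Fubini induction) and entirely real-variable, but it does not visibly generalize to the matrix-valued setting: the multiplicative inequality $h(\lambda y_0+(1-\lambda)y_1)\geq f(y_0)^\lambda g(y_1)^{1-\lambda}$ has no obvious analogue for noncommuting positive matrices. Your alternative sketch---complexify, view $e^{-\varphi}$ as a line-bundle metric, and invoke Berndtsson's positivity of direct images---is exactly the scalar case of the paper's strategy, and you have correctly anticipated that this is the route that carries the matrix-valued version. What your sketch omits, and what the paper supplies, is the Paley--Wiener step: rather than reading off convexity directly from Nakano positivity of the $L^2$-bundle, the paper passes through an isometric Fourier-type transform to an $A^2$-bundle where Theorem~\ref{thm:B} applies, then specializes at $\xi=0$ to recover the original integral.
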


Motivated by recent results in complex analysis, in this article we prove a generalized version of this theorem in the setting where the functions involved are matrix valued. However, before we can formulate this result, we need to clarify what it means for a matrix-valued function $g:\R^n\to\C^{r\times r}$ to be log concave.

Let us, for simplicity, start with the scalar-valued case, and so we assume that $r=1$ and that $g$ is of the form
$$g(x_1,\ldots,x_n)=e^{-\varphi(x_1,\ldots,x_n)}$$
for some function $\varphi$. If $\varphi$, and hence $g$, is twice differentiable, then saying that $\varphi$ is convex is equivalent to saying that the Hessian of $\varphi$ is positive definite, i.e. for any $u\in\R^n$
$$0\leq\sum_{j,k=1}^nu_k\frac{\partial^2\varphi}{\partial x_k\partial x_j}u_j=-\sum_{j,k=1}^nu_k\frac{\partial^2\log g}{\partial x_k\partial x_j}u_j=-\sum_{j,k=1}^nu_k\frac{\partial}{\partial x_k}\left(g^{-1}\frac{\partial g}{\partial x_j}\right)u_j.$$
The point here is, of course, that the expression on the far right is well-defined even if $g$ is matrix-valued.

Now as already mentioned, our inspiration stems from related constructions in complex analysis. In this latter setting, if we regard $g$ as a function $g:\C^n_z\to\R$ which is independent of the imaginary part of $z$, $g=e^{-\varphi}$ can be thought of as a metric on a trivial line bundle, and then the log concavity of $g$ corresponds to requiring that this metric is positively curved. This prompts us to make the following definition.

\begin{df}\label{df:1}
We say that a matrix valued function $g:\R^n\to\C^{r\times r}$ is a \textit{metric} if:\\
(i) each matrix element of $g$ is twice differentiable,\\
(ii) $g(x)$ is a hermitian, strictly positive definite matrix for all $x\in\R^n$.\vspace{0.1cm}\\
Furthermore, for any metric $g$, we let
$$\Theta_{jk}^g:=\frac{\partial}{\partial x_k}\left(g^{-1}\frac{\partial g}{\partial x_j}\right),$$ 
where differentiation should be interpreted elementwise, juxtaposition denotes matrix-multiplication, and $g^{-1}$ denotes the matrix inverse of $g$, (which is well defined as $g$ is assumed to be strictly positively definite everywhere).
\end{df}
We have chosen $\C^{r\times r}$ instead of $\R^{r\times r}$, since this added generality comes for free and fits more nicely with the complex analytic correspondence. This is not crucial at all, and one can safely replace $\C^{r\times r}$ with $\R^{r\times r}$ throughout the paper.

If $g:\R^n\to\C^{r\times r}$ where $r\geq2$, then $g$ is the real-variable analogue of a metric on a vector bundle. In the complex setting, there exists two different, but equally important notions of curvature on vector bundles: being curved in the sense of Griffiths, and in the sense of Nakano. These motivate us to introduce the following corresponding notions in our real valued setting.

\begin{df}\label{df:2}
Let $g:\R^n\to\C^{r\times r}$ be a metric, and for $u,v\in\C^n$ set $(u,v)_g:=v^*gu$, where $v^*$ denotes the conjugate transpose of $v$.\\
(i) We say that $g$ is \textit{log concave in the sense of Griffiths} if for any vectors $u\in\C^r$ and $v\in\C^n$
$$\sum_{j,k=1}^n\big(\Theta_{jk}^gu,u\big)_gv_j\bar{v}_k\leq0.$$
(ii) We say that $g$ is \textit{log concave in the sense of Nakano} if for any $n$-tuple of vectors $\{u_j\}_{j=1}^n\subset\C^r$
$$\sum_{j,k=1}^n\big(\Theta_{jk}^gu_j,u_k\big)_g\leq0.$$
Strict log concavity, log convexity and strict log convexity are defined similarly.
\end{df}

It is clear that Nakano log concavity implies log concavity in the sense of Griffiths, (just choose $u_j=uv_j$), and that both these conditions reduce to the ordinary log concavity of functions when $r=1$. Griffiths and Nakano log concavity also coincide when $n=1$. In section 2 we will give some examples and develop other basic properties of metrics that are log concave in the sense of Griffiths and Nakano. In particular, we will discuss the correspondence between the real- and complex-variable theory in greater detail.

We are now ready to formulate our matrix-valued version of the Prekopa theorem.

\begin{thm}\label{thm:main}
Let $g:\R^n_y\times\R^m_t\to\C^{r\times r}$ be a metric. Assume that $g$ is log concave in the sense of Nakano and define $\tilde{g}:\R^n\to\C^{r\times r}$ through,
\be\label{eq:P}
\tilde{g}(t):=\int_{\R^n}g(y,t)dV(y).
\ee
Then $\tilde{g}$ is log concave in the sense of Nakano as well.
\end{thm}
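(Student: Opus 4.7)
The plan is to translate the statement into complex analytic geometry and invoke Berndtsson's theorem on the Nakano positivity of direct image bundles; the weighted, vector-valued Paley--Wiener theorem serves as the bridge between the real-variable set-up and the complex one.

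First I complexify. Regard $g$ as a hermitian metric $h$ on the trivial rank-$r$ complex vector bundle $V$ over $\C^n_\zeta\times\C^m_\tau$ by setting $h(\zeta,\tau):=g(\Re\zeta,\Re\tau)$. As shadowed by the real--complex correspondence in Section~2, a direct computation (using $\partial_{\bar z_k}\partial_{z_j}=\tfrac14\partial_{x_k}\partial_{x_j}$ on functions of the real part) shows that real Nakano log concavity of $g$ is equivalent, up to the overall factor $1/4$, to Nakano semipositivity of the Chern curvature of $(V,h)$ in the complex sense. Under the hypothesis of the theorem, $(V,h)$ is therefore a Nakano-semipositively curved holomorphic hermitian vector bundle.

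Next I build the direct image. Since $h$ depends only on real parts, the naive $L^2$ pushforward of $(V,h)$ along $p:\C^n\times\C^m\to\C^m$ would have infinite-norm constant sections, so I regularize by multiplying $h$ with the Nakano-positive scalar weight $e^{-\epsilon|\Im\zeta|^2}$ (with $\epsilon>0$ to be sent to zero at the end). The regularized metric $h_\epsilon$ is still Nakano semipositive. By the weighted, vector-valued Paley--Wiener theorem, this produces a well-defined holomorphic Hilbert bundle $\mathcal{E}\to\C^m$ with the constant sections $v\in\C^r$ satisfying
$$\|v\|_\tau^2 \;=\; \int_{\C^n}\langle g(\Re\zeta,\Re\tau)v,v\rangle\,e^{-\epsilon|\Im\zeta|^2}\,d\lambda(\zeta) \;=\; \Big(\frac{\pi}{\epsilon}\Big)^{n/2}\langle\tilde g(\Re\tau)v,v\rangle.$$
Thus the rank-$r$ finite-rank piece of $\mathcal{E}$ spanned by constant sections carries, up to the harmless factor $(\pi/\epsilon)^{n/2}$, exactly the metric $\tilde g(\Re\tau)$. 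Berndtsson's theorem on direct images then yields that $\mathcal{E}$ is Nakano positively curved as a holomorphic Hilbert bundle over $\C^m$. Using the explicit (matrix-weighted, vector-valued) curvature formula for $\mathcal{E}$ to pass this positivity down to the rank-$r$ piece carrying $\tilde g$, and reversing the dictionary of the first step, gives Nakano log concavity of $\tilde g$ on $\R^m$ after letting $\epsilon\downarrow 0$.

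The main obstacle I anticipate is this last descent. Nakano positivity is subtle and is not inherited by arbitrary holomorphic subbundles, so one cannot simply restrict the curvature of $\mathcal{E}$ to the rank-$r$ subspace of constants and read off the sign; one must combine Berndtsson's explicit curvature formula with a careful analysis of the second fundamental form of the constants inside $\mathcal{E}$, verifying that the second-fundamental-form contribution is compatible with $\Theta^{\tilde g}$ and does not spoil the sign. The weighted, vector-valued Paley--Wiener theorem is what makes the infinite-dimensional Hilbert bundle $\mathcal{E}$ concrete enough for this computation to be carried out matrix-entry by matrix-entry.
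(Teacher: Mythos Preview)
Your overall architecture---complexify, apply the Berndtsson-type direct image positivity, and translate back---is exactly the paper's strategy. But the descent step you flag as ``the main obstacle'' is a genuine gap, and your plan does not close it.

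Restricting to the rank-$r$ subbundle of constant sections is precisely the operation under which Nakano positivity fails to be inherited. For a holomorphic subbundle $S\subset\mathcal{E}$ the Griffiths formula reads $\Theta^S=\Theta^{\mathcal{E}}|_S-\beta^*\!\wedge\beta$, and the second-fundamental-form term $\beta^*\!\wedge\beta$ is nonnegative, so it pushes the curvature in the \emph{wrong} direction. In your setup $\beta$ does not vanish: the $(1,0)$-part of the Chern connection applied to a constant $v$ is $h_\epsilon^{-1}\partial_\tau h_\epsilon\cdot v=\big(g^{-1}\partial_t g\big)(\Re\zeta,\Re\tau)v$, which is generically nonconstant in $\zeta$. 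Hoping that an explicit computation of $\beta$ ``does not spoil the sign'' is not a proof; in general it will. Your $\epsilon$-regularization and limit do not help here, and your invocation of Paley--Wiener plays no actual role in the argument as written (the Bergman bundle and the norm of constants are defined directly; moreover $h_\epsilon$ depends on both $\Re\zeta$ and $\Im\zeta$, so the theorem as stated does not even apply to it).

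The paper sidesteps the subbundle problem entirely, and this is where Paley--Wiener really enters. With $h(z,w)=g(\Im z,\Im w)$ (no $\epsilon$ needed), Theorem~\ref{thm:B} gives Nakano positivity of the Bergman bundle $E$. Theorem~\ref{thm:PW} then supplies an \emph{isometric isomorphism} $E\cong\tilde E$, where $\tilde E_w=L^2_w(\tilde h)$ consists of $\C^r$-valued functions of $\xi\in\R^n$ with weight $\tilde h(\xi,w)=(2\pi)^n\!\int e^{2\xi\cdot y}g(y,t)\,dV(y)$. A direct computation shows
\[
\big(\Theta^{\tilde E}_{jl}f,k\big)_{w,\tilde h}=\int_{\R^n}\big(\Theta^{\tilde h}_{jl}(\xi,w)f_w(\xi),k_w(\xi)\big)_{\tilde h_w(\xi)}\,dV(\xi).
\]
The key point is that $\tilde E_w$ is an $L^2$-space over $\xi$, so one may multiply sections by arbitrary cutoffs $\chi(\xi)$ and stay in the bundle; this is a localization, not a subbundle restriction, and it forces the integrand to be pointwise nonnegative in the Nakano sense for every $\xi$. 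Evaluating at $\xi=0$ gives Nakano log concavity of $\tilde g(t)$. That cutoff-localization trick is the replacement for your subbundle step, and it is exactly what the Paley--Wiener isometry buys.
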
 
Here the integral should be interpreted elementwise, and we also assume that these integrals all converge.
 
Unfortunately, the definition of log concavity in the sense of Nakano, which is needed for Theorem \ref{thm:main}, is not overly intuitive. However, there exists a relatively simple class of metrics that are still 'genuinly' matrix-valued, (in constrast to e.g. diagonal matrices). Namely, in section 2, example \ref{ex:3}, we prove the following corollary.

\begin{cor}
Let $g:\R_y\times\R^m_t\to\C^{r\times r}$ be a metric of the form
$$g(y,t)=e^{-\varphi(y,t)}A(y),$$
where $\varphi$ is convex and $A$ is a metric on $\R$, such that $\Theta^A$ is negative definite. Then
$$\tilde{g}(t)=\int_{\R}e^{-\varphi(y,t)}A(y)dy,$$
is log concave in the sense of Nakano.
\end{cor}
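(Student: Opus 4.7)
The plan is to apply Theorem \ref{thm:main} to the metric $g(y,t) = e^{-\varphi(y,t)} A(y)$ on $\R \times \R^m$, so the task reduces to verifying that $g$ itself is log concave in the sense of Nakano.

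First I would compute the curvature matrices $\Theta_{jk}^g$ explicitly. Index the variables by $0$ for $y$ and $1,\ldots,m$ for $t$, and write $\varphi_j = \partial_{x_j}\varphi$, $\varphi_{jk} = \partial_{x_j}\partial_{x_k}\varphi$, $A' = \partial_y A$. Since the scalar factor $e^{-\varphi}$ commutes with $A$, one finds $g^{-1}\partial_0 g = -\varphi_0 I + A^{-1} A'$ and $g^{-1}\partial_j g = -\varphi_j I$ for $j\geq 1$. Differentiating a second time in $x_k$, and noting that $A$ is independent of $t$, yields the compact formula
$$ \Theta_{jk}^g = -\varphi_{jk}\, I + \delta_{j0}\delta_{k0}\,\Theta^A. $$
Using the identity $(Xu_j, u_k)_g = e^{-\varphi}\, u_k^{*} A X u_j$, the Nakano form then decomposes as
$$ \sum_{j,k=0}^{m}\big(\Theta_{jk}^g u_j, u_k\big)_g = e^{-\varphi}\Big[-\sum_{j,k=0}^{m} \varphi_{jk}\, u_k^{*} A u_j + u_0^{*} A \Theta^A u_0 \Big] $$
for arbitrary vectors $u_0,\ldots, u_m \in \C^r$.

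The last term on the right is $\leq 0$ directly by the hypothesis on $\Theta^A$, interpreted via Definition \ref{df:2} as $u^{*} A \Theta^A u \leq 0$. For the double sum, convexity of $\varphi$ says the Hessian $H = (\varphi_{jk})$ is real symmetric and positive semi-definite. Writing a spectral decomposition $H = Q\Lambda Q^{T}$ with $Q$ real orthogonal and eigenvalues $\lambda_l \geq 0$, and setting $v_l = \sum_j Q_{jl} u_j \in \C^r$, one obtains
$$ \sum_{j,k=0}^{m} \varphi_{jk}\, u_k^{*} A u_j = \sum_{l=0}^{m} \lambda_l\, v_l^{*} A v_l \geq 0, $$
since $A$ is positive definite. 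The bracketed expression is therefore $\leq 0$, so $g$ is Nakano log concave, and Theorem \ref{thm:main} yields the corollary.

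The main idea in the argument is the decoupling step just used: the real symmetric Hessian of $\varphi$ couples with the Hermitian form $u\mapsto u^{*} A u$, and one has to extract nonnegativity from this pairing. Diagonalizing $H$ via a real orthogonal matrix splits the double sum into independent summands $\lambda_l v_l^{*} A v_l$, each manifestly nonnegative; the rest of the proof is essentially bookkeeping together with a direct appeal to Theorem \ref{thm:main}.
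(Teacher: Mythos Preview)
Your proof is correct and follows the same overall plan as the paper: compute $\Theta^g_{jk}=-\varphi_{jk}I+\delta_{j0}\delta_{k0}\Theta^A$, split the Nakano form into the $\Theta^A$-term and the Hessian-term, check each is $\le 0$, and then invoke Theorem~\ref{thm:main}. The one genuine difference is in how you handle the Hessian-term
\[
\sum_{j,k}\varphi_{jk}\,(u_j,u_k)_g.
\]
The paper (in Example~\ref{ex:2}, to which Example~\ref{ex:3} refers) treats this via Schur's theorem on Hadamard products: since $(\varphi_{jk})$ and the Gram matrix $((u_j,u_k)_g)$ are both positive semidefinite, so is their entrywise product, and evaluating on the all-ones vector gives the inequality. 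You instead diagonalize the real symmetric Hessian by an orthogonal $Q$ and reduce to a sum $\sum_l \lambda_l\,v_l^*Av_l$ with $\lambda_l\ge 0$. Your route is more self-contained and avoids quoting an external linear-algebra result; the paper's route is shorter once Schur's theorem is in hand and makes transparent that only positive semidefiniteness of the two ingredient matrices is used. Either argument is perfectly adequate here.
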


Now the proof of Theorem \ref{thm:main} depends on two other theorems that are interesting in their own right, and to which we now turn. The first of these theorems states the following:

\begin{thm}\label{thm:PW}
Let $g:\R^n\to\C^{r\times r}$ be a metric, and define $\tilde{g}:\R^n\to\C^{r\times r}$ through,
\be\label{eq:PW}
\tilde{g}(\xi):=(2\pi)^n\int_{\R^n}e^{2\xi\cdot y}g(y)dV(y),
\ee
where we interpret the right hand side as elementwise integration, and the dot in the exponent represents the scalar product. Let furthermore
$$A^2(g):=\{F\in\Ok(\C^n;\C^r):\int_{\C^n}\|F(z)\|^2_{g(y)}dV(z)<\infty\}$$
where $\|F(z)\|^2_{g(y)}:=F(x+iy)^*g(y)F(x+iy)$, and let
$$L^2(\tilde{g}):=\{f\in L^2_{loc}(\R^n;\C^r):\int_{\R^n}\|f\|^2_{\tilde{g}}dV<\infty\}.$$
Then the following holds:\vspace{0.1cm}\\
(i) If $f\in L^2(\tilde{g})$, then the function
\be\label{eq:FT}
F(z):=\int_{\R^n}f(\xi)e^{-i\xi\cdot z}dV(\xi)
\ee
is in $A^2(g)$, (where once again, integration is assumed to be elementwise).\vspace{0.1cm}\\
(ii) Any $F\in A^2(g)$ can be written as in (\ref{eq:FT}) for some $f\in L^2(\tilde{g})$.\vspace{0.1cm}\\
(iii) For $f\in L^2(\tilde{g})$ and $F\in A^2(g)$ related as in (\ref{eq:FT}), we have that
\be\label{eq:parseval}
\int_{\C^n}\|F(z)\|^2_{g(y)}dV(z)=\int_{\R^n}\|f(\xi)\|^2_{\tilde{g}(\xi)}dV(\xi).
\ee
\end{thm}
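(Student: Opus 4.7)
The plan is to adapt the classical Paley-Wiener theorem to this weighted, matrix-valued setting, treating (\ref{eq:FT}) as an ordinary Fourier transform in $x$ decorated by the exponential factor $e^{\xi\cdot y}$. The engine of the proof is the following computation, valid whenever both sides make sense: writing $F(x+iy)=\int f(\xi)e^{\xi\cdot y}e^{-i\xi\cdot x}\,dV(\xi)$ and applying the scalar Plancherel theorem componentwise in the $x$-variable (the matrix $g(y)$ being independent of $x$) yields
\begin{equation*}
\int_{\R^n_x} F(x+iy)^*g(y)F(x+iy)\,dV(x)=(2\pi)^n\int_{\R^n_\xi}f(\xi)^*g(y)f(\xi)e^{2\xi\cdot y}\,dV(\xi).
\end{equation*}
Integrating in $y$ and swapping the order of integration---legal by Fubini-Tonelli since the integrand is nonnegative---produces precisely $\int\|f\|_{\tilde g}^2\,dV(\xi)$, which is the Parseval identity (\ref{eq:parseval}). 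The identity is thus immediate for any $f$ with compact support, for which $F$ is manifestly entire and the manipulations are transparent.

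To prove (i), take $f\in L^2(\tilde g)$. Fubini applied to the $L^2(\tilde g)$ condition shows that $\int f^*g(y)fe^{2\xi\cdot y}\,dV(\xi)<\infty$ for almost every $y$, and since $g(y)$ is strictly positive definite one has a pointwise bound $f^*g(y)f\ge c(y)|f|^2$ with $c(y)>0$, so that $\xi\mapsto f(\xi)e^{\xi\cdot y}\in L^2(\R^n_\xi;\C^r)$ for a.e. $y$. Hence $F(\cdot+iy)$ is well-defined as its $L^2$-Fourier transform. Truncating $f_N:=f\cdot\mathbf{1}_{\{|\xi|\le N\}}$ yields a sequence of entire functions $F_N$ that is Cauchy in $A^2(g)$ by the identity above applied to $f-f_N$; the limit $F$ is holomorphic because on any compact $K\subset\C^n$ the weight $g$ is bounded below, so $A^2(g)$-convergence localizes to ordinary $L^2(K)$-convergence, which upgrades to locally uniform convergence by the mean-value property for holomorphic functions. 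Thus $F\in A^2(g)$, (\ref{eq:FT}) holds, and (\ref{eq:parseval}) extends by continuity.

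For (ii), given $F\in A^2(g)$, Fubini and the local lower-boundedness of $g$ imply $F(\cdot+iy)\in L^2(\R^n_x;\C^r)$ for a.e. $y$; define $h_y(\xi):=\mathcal{F}^{-1}_x[F(\cdot+iy)](\xi)$. The Cauchy-Riemann equations $\partial_{y_j}F=i\partial_{x_j}F$ together with integration by parts in $x$ formally yield the ODE $\partial_{y_j}h_y(\xi)=\xi_j h_y(\xi)$, forcing $h_y(\xi)=f(\xi)e^{\xi\cdot y}$ for some $y$-independent $f(\xi):=h_0(\xi)$. Fourier inversion in $x$ then gives (\ref{eq:FT}), and the Parseval identity forces $f\in L^2(\tilde g)$.

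The main obstacle is making the Cauchy-Riemann manipulation in (ii) rigorous, since a generic $F\in A^2(g)$ need not decay in any direction of $\C^n$ and the $x$-integrations by parts are \emph{a priori} suspect. I would handle this by mollifying $F$ in the real $x$-variables by a compactly supported smooth bump $\rho_\epsilon$; the convolution $F_\epsilon:=F*_x\rho_\epsilon$ is still holomorphic (since $\partial_{\bar z_j}$ commutes with convolution in $x$), is smooth with all $x$-derivatives lying in $L^2(\R^n_x)$ for a.e. $y$, and thus admits the formal computation on the Fourier side. Passing to the limit $\epsilon\to 0$ in $L^2$ (using Young's inequality) yields the required $f$ and the representation. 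An equivalent route is to reformulate (ii) as surjectivity of the isometry $L^2(\tilde g)\hookrightarrow A^2(g)$ provided by (i) and rule out a non-trivial orthogonal complement directly using Cauchy's theorem to shift contours $y\mapsto y+\epsilon$.
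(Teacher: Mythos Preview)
Your Parseval computation and truncation argument for (i) and (iii) are correct, and in fact cleaner than what the paper does: the paper establishes holomorphicity of $F$ directly via Morera, which forces it to prove absolute convergence of the integral (\ref{eq:FT}) for \emph{every} $z$; this in turn requires a Jensen-inequality estimate showing $\int|f_j(\xi)|^2e^{2\xi\cdot y}\,dV(\xi)<\infty$ for \emph{all} (not just a.e.) $y$, followed by a cone-decomposition trick to manufacture exponential decay. Your density argument sidesteps all of that, at the modest cost of interpreting (\ref{eq:FT}) initially as an $L^2$-Fourier transform.

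For (ii), however, your primary route has a genuine gap. Mollifying in the $x$-variable does make $F_\epsilon$ holomorphic with $\partial_{x_j}F_\epsilon(\cdot+iy)\in L^2(\R^n_x)$ for a.e.\ $y$, but that is not what you need in order to integrate the ODE $\partial_{y_j}h_y^\epsilon=\xi_jh_y^\epsilon$. To pass from $\partial_{y_j}F_\epsilon=i\partial_{x_j}F_\epsilon$ to a statement about $\partial_{y_j}h_y^\epsilon$ you must commute $\partial_{y_j}$ with the Fourier transform in $x$, i.e.\ you need $y\mapsto F_\epsilon(\cdot+iy)$ to be (at least) $C^1$ as a map into $L^2(\R^n_x)$. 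Young's inequality gives $\|\partial_{y_j}F_\epsilon(\cdot+iy)\|_{L^2}\le\|F(\cdot+iy)\|_{L^2}\|\partial_{x_j}\rho_\epsilon\|_{L^1}$, so what you actually need is that $\|F(\cdot+iy)\|_{L^2}$ is \emph{locally bounded} in $y$, and a.e.\ finiteness from Fubini does not give this. The paper fills exactly this hole with a separate lemma: using subharmonicity of $|F_j|^2$ and the sub-mean-value inequality over small polydisks, it shows that $y\mapsto\int_{\R^n}|F_j(x+iy)|^2\,dV(x)$ is continuous. Once you have that, your mollification argument goes through; without it, the ODE step is still formal.

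Your ``equivalent route'' at the end---contour shifting via Cauchy's theorem---is essentially the paper's actual method: it reduces to one real variable, applies Stokes' theorem on a horizontal strip with a cut-off $\chi_R(x)$, and sends $R\to\infty$. Note that this route \emph{also} uses the subharmonicity lemma, to guarantee $G_j\in L^2$ of the strip so that the error term from $\partial_{\bar z}\chi_R$ vanishes in the limit.
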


When $r=1$, everything is scalar-valued, and the norms $\|F(z)\|^2_{g(y)}$ and $\|f(\xi)\|^2_{\tilde{g}(\xi)}$ correspond to $|F(z)|^2e^{-\phi(y)}$ and $|f(\xi)|^2e^{-\tilde{\phi}(\xi)}$, with the weights $\phi(y)=-\log g(y)$ and $\tilde{\phi}(\xi)=-\log\tilde{g}(\xi)$. In this setting, we see that Theorem \ref{thm:PW} is reduced to a weighted version of the Payley-Wiener theorem (see e.g. \cite{H},\cite{G} and \cite{SW}, Chapter III, Theorem 2.3). Thus Theorem \ref{thm:PW} can be seen as a generalization of this classic theorem to the weighted, vector-valued setting.

The second theorem is about the curvature of infinite rank, holomorphic vector bundles. Let $D=\Omega\times U$ be a domain in $\C^n_z\times\C^m_w$, where $\Omega$ is pseudoconvex, and let $h$ be a hermitian metric on $D$, i.e. $h:D\to\C^{r\times r}$ is smooth, and such that $h(z,w)$ is a hermitian and strictly positive-definite matrix for each $(z,w)\in\Omega\times\ U$. Then, for each fix $w\in U$, $h_w(\cdot):=h(\cdot,w)$ will be a hermitian metric on $\Omega$, and we let
\be\label{eq:fiber}
A^2_w(h):=\{F\in\Ok(\Omega;\C^r):\|F\|^2_{w,h}:=\int_{\Omega}\|F(z)\|^2_{h_w(z)}dV(z)<\infty\}.
\ee
We will assume that for any two $w_1,w_2\in U$, the norms $\|\cdot\|_{w_1,h}$ and $\|\cdot\|_{w_2,h}$ are equivalent. Then, for different $w\in U$, the (Bergman) spaces $A^2_w$ are then all equal as vector spaces, but their norms vary with $w$. Hence if we create an infinite rank vector bundle $E$, by setting $E_w:=A^2_w(h)$, we will get a trivial bundle equipped with a nontrivial metric.

The theorem now states the following:

\begin{thm}\label{thm:B}
If $h$ is positively curved in the sense of Nakano, then the hermitian vector bundle $(E,\|\cdot\|_{w,h})$ is positive in the sense of Nakano as well.
\end{thm}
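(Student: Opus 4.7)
The plan is to follow Berndtsson's strategy for establishing positivity of direct image bundles, adapted to the matrix-valued weight setting. Fix $w_0\in U$ and holomorphic sections $u_1,\ldots, u_m$ of $E$ near $w_0$, represented by jointly holomorphic $\C^r$-valued functions $u_j(z,w)$ on $\Omega\times V$. The goal is to show
$$\sum_{j,k=1}^m\langle \Theta^E_{j\bar k}(w_0)\,u_j(\cdot, w_0), u_k(\cdot, w_0)\rangle_{w_0,h}\geq 0,$$
with $\Theta^E$ the Chern curvature of $(E,\|\cdot\|_{w,h})$. The starting point is the Bochner--Kodaira identity on $E$: for any pair of holomorphic sections,
$$\partial_{w_j}\bar\partial_{w_k}(s_1,s_2)_w = (D'_j s_1, D'_k s_2)_w - \langle\Theta^E_{j\bar k} s_1, s_2\rangle_w,$$
where $D'$ is the Chern connection on $E$. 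Summing with $s_1=u_j$, $s_2=u_k$ expresses the Nakano pairing of $E$ as $\|\sum_j D'_j u_j\|^2_w$ minus the Hessian $\sum_{j,k}\partial_{w_j}\bar\partial_{w_k}(u_j,u_k)_w$.

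To compute this Hessian explicitly, differentiate $(u_j,u_k)_w=\int_\Omega u_k^* h u_j\, dV(z)$ twice under the integral. Using holomorphicity (resp.~antiholomorphicity) of $u_j$ (resp.~$u_k^*$) in $w$, and completing a square in $V:=\sum_j\partial_{w_j}u_j$ against $Y:=\sum_j(\partial_{w_j}h)u_j$, one obtains
$$\sum_{j,k}\partial_{w_j}\bar\partial_{w_k}(u_j,u_k)_w = \int_\Omega\|V+h^{-1}Y\|^2_h\,dV(z) + \int_\Omega\sum_{j,k}\langle\Theta^h_{j\bar k}\,u_j,u_k\rangle_h\,dV(z),$$
where $\Theta^h_{j\bar k}$ denotes the $(w,\bar w)$-block of the Nakano curvature of $h$. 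The Chern connection on $E$ is the $L^2_h$-orthogonal projection of the ambient connection onto the Bergman subspace: $\sum_j D'_j u_j = \pi_w(V+h^{-1}Y)$ with $\pi_w$ the projection onto $A^2_w(h)$. Combined with Pythagoras in $L^2_h$,
$$\sum_{j,k}\langle\Theta^E_{j\bar k} u_j, u_k\rangle_w = -\|(V+h^{-1}Y)-\pi_w(V+h^{-1}Y)\|^2_{L^2_h} + \int_\Omega\sum_{j,k}\langle\Theta^h_{j\bar k}\,u_j,u_k\rangle_h\,dV(z).$$

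The main obstacle is that the first term on the right is nonpositive while the second is nonnegative (by Nakano positivity of $h$ in the $w$-directions), so the sum could in principle have either sign. The crux is to estimate the nonpositive term via H\"ormander's weighted $L^2$-inequality for $\bar\partial$ on the pseudoconvex fiber $\Omega$: one recognizes $(V+h^{-1}Y)-\pi_w(V+h^{-1}Y)$ as the minimal $L^2_h$-solution of an explicit $\bar\partial_z$-equation whose data is built from the $w$-derivatives of $u_j$ and $h$, and the Bochner--Kodaira--Nakano inequality on the fiber bounds its norm in terms of the $z\bar z$- and $z\bar w$-blocks of the curvature of $h$. This is where the full Nakano positivity of $h$ on $\Omega\times U$ (not merely in the $w$-directions) is used essentially, producing the correct sign overall. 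The principal technical difficulty specific to the matrix setting is maintaining the noncommutativity of $h$ and its derivatives through the integration-by-parts step.
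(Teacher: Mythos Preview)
Your approach is essentially the same as the paper's: both use the Griffiths subbundle formula (which your explicit differentiation-under-the-integral and Pythagoras argument reproduces) to write
\[
\sum_{j,k}(\Theta^E_{jk}u_j,u_k)=\sum_{j,k}(\Theta^h_{jk}u_j,u_k)-\Big\|\pi_\perp\sum_j D^G_{w_j}u_j\Big\|^2,
\]
recognize the orthogonal piece as the minimal $L^2_h$-solution of a $\bar\partial_z$-equation, and bound it by H\"ormander-type estimates on the fiber.

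The gap is in your last paragraph, which is exactly where the vector-valued case diverges from the scalar one and which you leave as a black box. Concretely, the data of the $\bar\partial_z$-equation is $f=\sum_{\lambda}\sum_j\Theta^h_{j\bar\lambda}u_j\,d\bar z_\lambda$ (the mixed $w\bar z$ curvature acting on the $u_j$), and the H\"ormander estimate reads $\|v\|^2\le(B^{-1}f,f)$ with $B$ built from the $z\bar z$-block of $\Theta^h$. What remains is the inequality
\[
(B^{-1}f,f)\le\sum_{j,k}(\Theta^h_{j\bar k}u_j,u_k),
\]
which after dualizing becomes, for every $n$-tuple $\{\alpha_\lambda\}$,
\[
\Big|\sum_\lambda\Big(\alpha_\lambda,\sum_j\Theta^h_{j\bar\lambda}u_j\Big)\Big|^2\le\Big(\sum_{j,k}(\Theta^h_{j\bar k}u_j,u_k)\Big)\Big(\sum_{\lambda,\mu}(\Theta^h_{\lambda\bar\mu}\alpha_\lambda,\alpha_\mu)\Big).
\]
In the scalar case this is immediate; in the matrix case it is not, and the paper proves it by packaging the $u_j$ and $\alpha_\lambda$ into vector-valued $(n+m,1)$-forms and observing that Nakano positivity of $\Theta^h$ makes $[i\Theta,\Lambda]$ a positive operator, so that $([i\Theta,\Lambda]\cdot,\cdot)$ is a genuine inner product and the displayed inequality is literally Cauchy--Schwarz. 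Your phrase ``producing the correct sign overall'' asserts this step without supplying it, and it is the one genuinely new ingredient relative to Berndtsson's scalar proof.
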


Infinite rank vector bundles where the fibers are Bergman spaces, like $(E,\|\cdot\|_{w,h})$, have been extensively studied, (mainly by Berndtsson), in later years, (see e.g. \cite{B}, \cite{MT}, \cite{LY} and the references therein). In \cite{B}, (Theorem 1.1), it is shown that in the scalar-valued setting, (i.e. $r=1$ so that $h(z,w)=e^{-\varphi(w,z)}$), if $\varphi(w,z)=-\log h(w,z)$ is plurisubharmonic in $(z,w)$, then the bundle $(E,\|\cdot\|_{w,h})$ is positively curved in the sense of Nakano. Thus Theorem \ref{thm:B} is an extension of this result to the vector-valued setting. Closely related theorems, but with compact fibers, have previously been studied by Mourougane-Takayama \cite{MT}, and Liu-Yang \cite{LY}.

Now let us sketch how these two results can be combined to give a proof of Theorem \ref{thm:main}. The first observation is that although the relation (\ref{eq:PW}) at first might seem quite different from the one in Theorem \ref{thm:main}, the latter can actually be obtained rather easily from (\ref{eq:PW}). Namely, assume that the metric $g$ in Theorem \ref{thm:PW} depends on yet another variable $t\in\R^m$, i.e. $g:\R^n_y\times\R^m_t\to\C^{r\times r}$. For $t\in\R^m$ fix, (\ref{eq:PW}) then becomes
\be\label{eq:vb_pre}
\tilde{g}(\xi,t):=(2\pi)^n\int_{\R^n}e^{2\xi\cdot y}g(y,t)dV(y),
\ee
and choosing $\xi=0$ yields the sought for relation (\ref{eq:P}), (up to a constant).

However, if $g$, and hence $\tilde{g}$, depends on an extra variable $t$, then so will the Hilbert spaces $A^2(g)$ and $L^2(\tilde{g})$. This leads us to the study of (infinite rank) vector bundles, and so we can relate it to the setting of Theorem \ref{thm:B}.

In this latter setting we let $D=\C^n_z\times\C^m_w$ but at the same time we also note that for Theorem \ref{thm:PW}, it is of utmost importance that the metric on $A^2$ is independent of the real part of $z$. This will, however, certainly be the case if we start with a metric $g:\R^n_y\times\R^m_t\to\C^{r\times r}$, and think of it as a metric $h:\C^n_z\times\C^m_w\to\C^{r\times r}$ which is independent of the real parts of $z=x+iy$ and $w=s+it$, i.e. $h(z,w)=g(y,t)$.

Now we can define the metric $\tilde{h}:\R^n_\xi\times\C^m_w\to\C^{r\times r}$ through
\be\label{eq:herm_pre}
\tilde{h}(\xi,w)=(2\pi)^n\int_{\R^n}e^{2\xi\cdot y}h(y,w)dV(y)
\ee
and let
\be\label{eq:L2}
L^2_w(\tilde{h}):=\{f\in L^2_{loc}(\R^n;\C^r):\|f\|^2_{w,\tilde{h}}:=\int_{\R^n}\|f(\xi)\|^2_{\tilde{h}_w(\xi)}dV(\xi)<\infty\}.
\ee
Similar to the construction of $E$ we see that for different $w\in\C^m$, these $L^2$-spaces are equal as vector spaces, but with norms that vary with $w$. Hence in this way we can construct a second infinite rank, trivial, hermitian vector bundle $(\tilde{E},\|\cdot\|_{w,\tilde{h}})$ over $\C^m$, equipped with a nontrivial metric, by setting $\tilde{E}_w:=L^2_w(\tilde{h})$.

What Theorem \ref{thm:PW} says is basically that in this setting, the two vector bundles $(E,\|\cdot\|_{w,h})$and $(\tilde{E},\|\cdot\|_{w,\tilde{h}})$ are isometrically isomorphic. Combined with Theorem \ref{thm:B} we hence get that if $h$ is Nakano positive, (which will be the case if $g$ is Nakano log concave), then the bundle $(\tilde{E},\|\cdot\|_{w,\tilde{h}})$ will be Nakano positive as well. Thus for the proof of Theorem \ref{thm:main} we need to check that this implies the Nakano log concavity of $\tilde{g}$.

We end this introduction with a few words about the proofs of Theorem \ref{thm:PW}-\ref{thm:B}. We learned about the scalar-valued case of Theorem \ref{thm:PW} from the master's thesis of Jakob Hultgren, performed at the mathematics department of Chalmers University of Technology and Gothenburg University. However, since this thesis was never properly published, we have included a full proof in section 3, where we basically just adapt the scalar-valued proof to the vector-valued setting.

Of course, the master's thesis was not the first time these type of results were obtained. We have definitely not performed any exhaustive reference study, but refer once again to \cite{H}, \cite{G}, and \cite{SW}, and the references therein.

Finally, our proof of Theorem \ref{thm:B} use the same ideas as in the scalar-valued case, (\cite{B}, Theorem 1.1). These are the Griffiths subbundle formula, and H\"ormander $L^2$-estimates for the $\dbar$-equation. This is parallel to the proof of Brascamp and Lieb of the Pr\'ekopa theorem, \cite{BL}. Since these $L^2$-estimates in the vector-valued setting are somewhat different from the estimates in the scalar-valued setting, the second part of the proof is a little different from the proof in \cite{B}.

\section*{Acknowledgments}
\noindent It is a pleasure to thank Bo Berndtsson and Jakob Hultgren for inspiring and fruitful discussions.

\section{Examples and basic properties of log concave metrics}
\noindent As mentioned in the introduction, the log concavity properties of Definition \ref{df:2} are well-known concepts in complex differential and algebraic geometry. The aim of this section is to investigate which of the basic complex analytic properties that can be given an appropriate interpretation in the real setting. Of course, these properties can always be shown to hold in the 'complex sense', namely by interpreting the metrics $g:\R^n\to\C^{r\times r}$ as metrics $h:\C^n_z\to\C^{r\times r}$ that do not depend on the real part of $z$, but we have tried to come up with real proofs as far as possible.

We begin by looking at a few simple examples.  

\begin{ex}\label{ex:1}
Assume that the metric $g:\R^n\to\R^{r\times r}$ is diagonal, $g(x)=\textrm{diag}\{g_1(x),\ldots,g_r(x)\}$. Any reasonable definition of what it means for $g$ to be log concave, should in this case be that $g_j$ is log concave for all $j=1,\ldots,r$. Let us check that this is so for log concavity in both the Griffiths and Nakano sense.

It is immediate that
$$\Theta_{jk}=\textrm{diag}\left\{\frac{\partial^2\log g_1}{\partial x_j\partial x_k},\ldots,\frac{\partial^2\log g_r}{\partial x_j\partial x_k}\right\},$$
and so
$$g\cdot\Theta_{jk}=\textrm{diag}\left\{g_1\frac{\partial^2\log g_1}{\partial x_j\partial x_k},\ldots,g_r\frac{\partial^2\log g_r}{\partial x_j\partial x_k}\right\}.$$
Hence, if we let $u_j=(u_j^1,\ldots,u_j^r)$ we get that
{\setlength\arraycolsep{2pt}
\begin{eqnarray}
&&\sum_{j,k=1}^n\big(\Theta_{jk}u_j,u_k\big)_g=\sum_{j,k=1}^nu_k^*g\cdot\Theta_{jk}u_j=\nonumber\\
&&\qquad=g_1\sum_{j,k=1}^nu_j^1\frac{\partial^2\log g_1}{\partial x_j\partial x_k}\bar{u}_k^1+\ldots+g_r\sum_{j,k=1}^nu_j^r\frac{\partial^2\log g_r}{\partial x_j\partial x_k}\bar{u}_k^r.\nonumber
\end{eqnarray}}
\!\!On the other hand, in the Griffiths case we have
{\setlength\arraycolsep{2pt}
\begin{eqnarray}
&&\sum_{j,k=1}^n\big(\Theta_{jk}u,u\big)_gv_j\bar{v}_k=\nonumber\\
&&\qquad=g_1|u^1|^2\sum_{j,k=1}^nv_j\frac{\partial^2\log g_1}{\partial x_j\partial x_k}\bar{v}_k+\ldots+g_r|u^r|^2\sum_{j,k=1}^nv_j\frac{\partial^2\log g_r}{\partial x_j\partial x_k}\bar{v}_k.\nonumber
\end{eqnarray}}
\!\!Since $g$ is assumed to be strictly positive definite, each diagonal element $g_j$ is a strictly positive function, so the factors in front of the sums do not change the signs. Thus we see that, as expected, the definitions of log concavity in the sense of Griffiths and Nakano both in this case are equivalent to requiring that $g_j$ is log concave for each diagonal element of $g$.

In this setting, Theorem \ref{thm:main} says that if this is the case, then
$$\tilde{g}_j(t)=\int_{\R^n}g_j(y,t)dV(y)$$
is log concave on $\R^m$, for all $j=1,\ldots,r$. Clearly, this is just Prekopa's theorem.
\end{ex}

\begin{ex}\label{ex:2}
For a slightly (but only \textit{slightly}!) more 'genuin' matrix-valued example, let us study metrics of the form
$$g(x)=f(x)\cdot C,$$
where $f\in C^2(\R^n)$ is a strictly positive function and $C\in\C^{l\times l}$ is a constant, hermitian, strictly positive definite matrix.

In this case we have that
$$\Theta_{jk}=\frac{\partial}{\partial x_k}\left(\frac{1}{f}C^{-1}\frac{\partial f}{\partial x_j}C\right)=\frac{\partial^2\log f}{\partial x_j\partial x_k}\cdot I,$$
where $I$ denotes the identity matrix.

The Griffiths log concavity condition hence becomes,
$$\sum_{j,k=1}^n\big(\Theta_{jk}u,u\big)_gv_j\bar{v}_k=\|u\|^2_g\sum_{j,k=1}^nv_j\frac{\partial^2\log f}{\partial x_j\partial x_k}\bar{v}_k\leq0,$$
for all $u\in\C^r$ and $v\in\C^n$, while the Nakano log concavity condition becomes,
\be\label{eq:N_ex}
\sum_{j,k=1}^n\big(\Theta_{jk}u_j,u_k\big)_g=\sum_{j,k=1}^n\frac{\partial^2\log f}{\partial x_j\partial x_k}\big(u_j,u_k\big)_g\leq0,
\ee
for all $n$-tuples $\{u_j\}_{j=1}^n\subset\C^r$.

It is clear that the Griffiths log concavity condition is equivalent to the log concavity of $f$, but at first sight, it looks like the same thing does not necessarily hold in the Nakano case. However $f$ log concave does in fact also imply that $g$ is Nakano log concave. (For the converse property, just choose $u_j=uv_j$ in (\ref{eq:N_ex}).)

This follows from a theorem due to Schur, (\cite{L} Theorem 7, Chapter 10), which states that if $A=(a_{jk})$ and $B=(b_{jk})$
are positive definite matrices, and we define $M=(m_{jk})$ as the elementwise product of $A$ and $B$,
$$m_{jk}=a_{jk}b_{jk},$$
then $M$ is also a positive definite matrix.

Now let
$$a_{jk}=-\frac{\partial^2\log f}{\partial x_j\partial x_k}\quad\textrm{and}\quad b_{jk}=(u_j,u_k)_g.$$
Then the log concavity of $f$ implies that $A$ is positive definite, and $B$ is also positive definite since it is the Gram matrix of the $n$-tuple $\{u_j\}_{j=1}^n$. Hence their elementwise product, $M$, is also positive definite, and if we let $\mathbf{1}\in\R^r$ denote a vector of ones, we have that
$$\sum_{j,k=1}^n\big(\Theta_{jk}u_j,u_k\big)_g=-(M\mathbf{1},\mathbf{1})_I\leq0,$$
so $g$ is log concave in the sense of Nakano.
\end{ex}

\begin{ex}\label{ex:3}
Finally, we have the class of metrics described in the introduction. Namely, metrics $g:\R_y\times\R^m_t\to\C^{r\times r}$ of the form, $g(y,t)=e^{-\varphi(y,t)}A(y)$, where $\varphi$ is convex, and $A$ is a metric on $\R$ such that,
$$\Theta^A=\frac{d}{dy}\left(A^{-1}\frac{dA}{dy}\right),$$
is negative definite.

For any $(m+1)$-tuple $\{u_j\}_{j=1}^{m+1}\subset\C^r$ we then have that
$$\sum_{j,k=1}^{m+1}\big(\Theta_{jk}u_j,u_k\big)_g=-\sum_{j,k=1}^{m+1}\frac{\partial^2\varphi}{\partial t_j\partial t_k}\big(u_j,u_k\big)_g+\big(\Theta^Au_{m+1},u_{m+1}\big)_g.$$
From the previous example we know that the convexity of $\varphi$ implies that the sum on the right hand side is non-positive. Together with the negative definiteness of $\Theta^A$, we get that $g$ is log concave in the sense of Nakano.
\end{ex}

Now let us turn to the general study of log concave metrics. The following basic properties are the real variable analogues of corresponding results in complex differential geometry.

\begin{prop}\label{prop:1}
Let $g:\R^n\to\C^{r\times r}$ be a metric. Then the following holds:\vspace{0.2cm}\\
(i) For any pair of vectors $u,v\in\C^r$,
$$(\Theta_{jk}u,v)_g=(u,\Theta_{kj}v)_g\qquad\textrm{for all }j,k=1,\ldots,n.$$
(ii) In the Griffiths setting, if $g$ is log concave, then $g^{-1}$ is log convex, but the corresponding result does not hold in general in the Nakano case.\vspace{0.2cm}\\
(iii) If $g$ is log concave in the sense of Griffiths, then $\det g$ is a log concave function.\vspace{0.2cm}\\
(iv) Assume that $g$ is log concave either in the sense Griffiths (or Nakano), and let $f:\R^n\to\R$ be a log concave function. Then the metric $\tilde{g}:=f\cdot g$ will also be log concave in the sense of Griffiths (or Nakano).
\end{prop}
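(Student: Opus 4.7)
My plan is to reduce each part to a direct computation with $\Theta^g_{jk}=\partial_k(g^{-1}\partial_j g)$, using the Hermiticity of $g$ throughout. For (i), the Leibniz rule yields
$$g\,\Theta^g_{jk}=\partial_j\partial_k g-(\partial_k g)\,g^{-1}(\partial_j g),$$
and since every factor on the right is Hermitian, the adjoint of the right-hand side is obtained by swapping $j\leftrightarrow k$. Hence $g\,\Theta^g_{jk}=(g\,\Theta^g_{kj})^{*}=(\Theta^g_{kj})^{*}g$, and therefore $(\Theta^g_{jk}u,v)_g=v^{*}g\,\Theta^g_{jk}u=v^{*}(\Theta^g_{kj})^{*}gu=(u,\Theta^g_{kj}v)_g$.

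For (ii), differentiating $g\cdot g^{-1}=I$ gives $\partial_j g^{-1}=-g^{-1}(\partial_j g)g^{-1}$, and a short computation then produces
$$\Theta^{g^{-1}}_{jk}=-g\,\Theta^g_{kj}\,g^{-1}.$$
Setting $w=g^{-1}u$ (so that $u^{*}g^{-1}=w^{*}$ by Hermiticity) I find $(\Theta^{g^{-1}}_{jk}u,u)_{g^{-1}}=-(\Theta^g_{kj}w,w)_g$; summing against $v_j\bar v_k$, relabeling $j\leftrightarrow k$, and applying the Griffiths hypothesis with $\bar v$ in place of $v$ (which is legitimate since the hypothesis holds for all complex $v$) delivers the required non-negativity $\sum_{j,k}(\Theta^{g^{-1}}_{jk}u,u)_{g^{-1}}v_j\bar v_k\ge 0$, i.e.\ Griffiths log convexity of $g^{-1}$. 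The same substitution fails in the Nakano case: replacing each test vector $u_j$ by $g^{-1}u_j$ produces $\sum_{j,k}(\Theta^g_{kj}w_j,w_k)_g$, and the index swap cannot be absorbed into the Nakano hypothesis $\sum_{j,k}(\Theta^g_{jk}w_j,w_k)_g\le 0$ when the vectors depend on the index. For the counterexample I would transplant the standard complex-analytic fact that Nakano positivity is not preserved under dualization into the real setting through the correspondence $g(y)\leftrightarrow h(x+iy)$ described in the introduction.

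For (iii), start from $\partial_j\log\det g=\mathrm{tr}(g^{-1}\partial_j g)$ and differentiate once more to obtain $\partial_j\partial_k\log\det g=\mathrm{tr}(\Theta^g_{jk})$. Fixing a point and choosing a $g$-orthonormal basis $\{u_\alpha\}_{\alpha=1}^{r}$ of $\C^r$, invariance of the trace under similarity gives
$$\mathrm{tr}(\Theta^g_{jk})=\sum_{\alpha=1}^{r}(\Theta^g_{jk}u_\alpha,u_\alpha)_g.$$
Summing this identity against $v_j\bar v_k$ and applying the Griffiths hypothesis once for each $\alpha$ shows $\sum_{j,k}v_j\bar v_k\,\partial_j\partial_k\log\det g\le 0$, so the (real symmetric) Hessian of $\log\det g$ is negative semi-definite, proving $\det g$ is log concave.

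For (iv), set $\tilde g=fg$ with $f>0$ log concave. The Leibniz rule gives $\tilde g^{-1}\partial_j\tilde g=(\partial_j\log f)\,I+g^{-1}\partial_j g$, hence
$$\Theta^{\tilde g}_{jk}=(\partial_j\partial_k\log f)\,I+\Theta^g_{jk}.$$
Using $(u,u)_{\tilde g}=f(u,u)_g$ and $(\Theta^g_{jk}u,u)_{\tilde g}=f(\Theta^g_{jk}u,u)_g$, the Griffiths form for $\tilde g$ splits as the sum of the Griffiths form for $g$ (scaled by $f>0$) and $f(u,u)_g\sum_{j,k}(\partial_j\partial_k\log f)v_j\bar v_k$, both of which are non-positive by hypothesis. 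In the Nakano case the only non-trivial piece is $f\sum_{j,k}(\partial_j\partial_k\log f)(u_j,u_k)_g$, whose non-positivity is precisely the Schur product argument from Example~\ref{ex:2}: the matrix $[-\partial_j\partial_k\log f]$ is positive semi-definite by concavity of $\log f$, the Gram matrix $[(u_j,u_k)_g]$ is positive semi-definite, and their Hadamard product inherits positivity. The main obstacle is the counterexample half of (ii); everything else amounts to careful bookkeeping with the Leibniz rule.
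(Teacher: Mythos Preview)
Your argument is correct and follows essentially the same route as the paper: the same Leibniz-rule identities for $g\Theta^g_{jk}$ and $\Theta^{g^{-1}}_{jk}$, the same trace formula $\partial_j\partial_k\log\det g=\mathrm{tr}(\Theta^g_{jk})$, and the same splitting $\Theta^{\tilde g}_{jk}=(\partial_j\partial_k\log f)I+\Theta^g_{jk}$ together with the Schur product theorem for the Nakano case. The one place where you are less complete is the negative half of (ii): the paper supplies an explicit $2\times2$ metric (its Example~\ref{ex:4}) witnessing that Nakano log concavity of $g^{-1}$ does not force Nakano log convexity of $g$, whereas you only indicate that such an example can be transplanted from the complex setting---a valid strategy, but one you should actually carry out.
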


\begin{proof}
By differentiating the equality
$$I=g\cdot g^{-1}$$
with respect to $x_j$, we see that,
$$\frac{\partial g^{-1}}{\partial x_j}=-g^{-1}\frac{\partial g}{\partial x_j}g^{-1}.$$
Using this in the definition of $\Theta_{jk}^{g^{-1}}$ yields
$$\Theta_{jk}^{g^{-1}}:=\frac{\partial}{\partial x_k}\left(g\frac{\partial g^{-1}}{\partial x_j}\right)=-\frac{\partial}{\partial x_k}\left(\frac{\partial g}{\partial x_j}g^{-1}\right).$$
Now since $g$ is hermitian, so is $g^{-1}$ and all the partial derivatives of $g$, which in turn yields that,
\be\label{eq:1}
\big(\Theta_{jk}^{g}\big)^*=\frac{\partial}{\partial x_k}\left(\frac{\partial g}{\partial x_j}g^{-1}\right)=-\Theta_{jk}^{g^{-1}}.
\ee

On the other hand, by direct computation we get that
$$\Theta_{jk}^{g}=\frac{\partial}{\partial x_k}\left(g^{-1}\frac{\partial g}{\partial x_j}\right)=-g^{-1}\frac{\partial g}{\partial x_k}g^{-1}\frac{\partial g}{\partial x_j}+g^{-1}\frac{\partial^2 g}{\partial x_j\partial x_k},$$
and
$$\Theta_{jk}^{g^{-1}}=-\frac{\partial}{\partial x_k}\left(\frac{\partial g}{\partial x_j}g^{-1}\right)=-\frac{\partial^2 g}{\partial x_j\partial x_k}g^{-1}+\frac{\partial g}{\partial x_j}g^{-1}\frac{\partial g}{\partial x_k}g^{-1}.$$
Hence
$$-g^{-1}\Theta_{kj}^{g^{-1}}=\Theta_{jk}^{g}g^{-1},$$
or equivalently
\be\label{eq:2}
\Theta_{jk}^{g}=-g^{-1}\Theta_{kj}^{g^{-1}}g.
\ee

Altogether, (\ref{eq:1}) and (\ref{eq:2}) yield
$$\big(\Theta_{jk}^{g}u,v\big)_g=v^*g\Theta_{jk}^{g}u=-v^*gg^{-1}\Theta_{kj}^{g^{-1}}gu=v^*(\Theta_{kj}^g)^*gu=\big(u,\Theta_{kj}^gv\big)_g.$$
This proves (i).

To prove (ii), we note that by (\ref{eq:1}) and property (i)
{\setlength\arraycolsep{2pt}
\begin{eqnarray}
\big(\Theta_{jk}^{g^{-1}}u,v\big)_{g^{-1}}&=&v^*g^{-1}\Theta_{jk}^{g^{-1}}u=-(g^{-1}v)^*(\Theta_{jk}^{g})^*g(g^{-1}u)=\nonumber\\
&=&-\big(g^{-1}u,\Theta_{jk}^gg^{-1}v\big)_g=-\big(\Theta_{kj}^gg^{-1}u,g^{-1}v\big)_g.\nonumber
\end{eqnarray}}
\!\!In particular, if we take any $n$-tuple of vectors $\{u_j\}_{j=1}^n\subset\C^r$ and let $v_j=g^{-1}u_j$, we get that
$$\sum_{j,k=1}^n\big(\Theta_{jk}^{g^{-1}}u_j,u_k\big)_{g^{-1}}=-\sum_{j,k=1}^n\big(\Theta_{kj}^gv_j,v_k\big)_g.$$
Thus, we see that in the Griffiths setting, if $g$ is log concave, then $g^{-1}$ is log convex, but since the indices on the right hand side are switched, the same conclusion can not be made in the Nakano case. In example \ref{ex:4} below we will study a metric, $g$, such that $g^{-1}$ is Nakano log concave while $g$ is neither log concave nor log convex in the sense of Nakano.

To prove (iii), let
$$S^g(w):=\sum_{j,k=1}^n\Theta^g_{jk}w_j\bar{w}_k,$$
where $w=(w_1,\ldots,w_n)\in\C^n$. Then, saying that $g$ is log concave in the sense of Griffiths, is equivalent to saying that for each vector $w$, the matrix-valued function $S^g(w)$ is negative semidefinite.

Now it is a well-known fact from linear algebra, (see e.g. \cite{L}, Theorem 4, Chapter 9), that
\be\label{eq:LA}
\frac{\partial}{\partial x_k}\log\det g=tr\Big(g^{-1}\frac{\partial g}{\partial x_k}\Big).
\ee
Hence
$$\sum_{j,k=1}^nw_j\frac{\partial^2\log\det g}{\partial x_j\partial x_k}\bar{w}_k=\sum_{j,k=1}^nw_jtr\big(\Theta_{jk}^g\big)\bar{w}_k=tr\big(S^g(w)\big),$$
and since $S^g(w)$ is negative semidefinite, $tr(S^g(w))$ will be negative as well. This shows that $\det g$ is a log concave function and finishes the proof of the proposition.

Finally, the proof of (iv) is an immediate consequence of the arguments in examples \ref{ex:2} and \ref{ex:3}.
\end{proof}

\begin{ex}\label{ex:4}
Let $g:\R^n\to\C^{n\times n}$ be a metric, and assume that for some $x_0\in\R^n$, $g(x_0)=I$ and
\be\label{eq:counter_ex}
\Theta_{jk}^g(x_0)=\left\{\begin{array}{cl}
1 & \textrm{row $j$, column $k$}, \\
0 & \textrm{otherwise}. 
\end{array}\right.
\ee
(An explicit example of such a metric when $n=2$ will be given below.)

For two vectors $v,w\in\C^n$, it is then straightforward to see that
$$\big(\Theta_{jk}^gv,w\big)_g=v_k\bar{w}_j,$$
at $x_0$. Hence, for any $n$-tuple $\{u_j\}_{j=1}^n\subset\C^n$ with $u_j=(u_{j1},\ldots,u_{jn})$, we get that
\be\label{eq:3}
\sum_{j,k=1}^n\big(\Theta_{jk}^gu_j,u_k\big)_g=\sum_{j,k=1}^nu_{jk}\bar{u}_{kj}.
\ee
This expression is neither positive nor negative.

Now let us study the inverse, or dual, metric $g^{-1}$. Here we obviously still have that $g^{-1}(x_0)=I$, and from (\ref{eq:1}) above, we also have that
$$\Theta^{g^{-1}}_{jk}(x_0)=-\big(\Theta^g_{jk}\big)^*(x_0)=\left\{\begin{array}{cl}
-1 & \textrm{row $k$, column $j$}, \\
0 & \textrm{otherwise}, 
\end{array}\right.$$
and so in this case
$$\sum_{j,k=1}^n\big(\Theta_{jk}^{g^{-1}}u_j,u_k\big)_{g^{-1}}=-\sum_{j,k=1}^nu_{jj}\bar{c}_{kk}=-\Big|\sum_{j=1}^nu_{jj}\Big|^2\leq0.$$
Thus, in the sense of Nakano, $g^{-1}$ is log concave while $g$ is neither log convex nor log concave.

Note that we in this way, as a bonus, have found a metric that is log convex in the sense of Griffiths, but not in the sense of Nakano. Since $g^{-1}$ is also Griffiths log concave, by Proposition \ref{prop:1} (ii), $g$ will be Griffiths log convex. The reason behind this is that in the Griffiths setting, each $u_{jk}$ in (\ref{eq:3}) is separable. More precisely, each $u_j$ will be replaced by $uv_j$, with $u,v\in\C^n$, and we will get
$$\sum_{j,k=1}^n\big(\Theta_{jk}^gu,u\big)_gv_j\bar{v}_k=\sum_{j,k=1}^nu_j\bar{u}_kv_j\bar{v}_k=\Big|\sum_{j=1}^nu_jv_j\Big|^2\geq0.$$

Finally, let us give an explicit example of a metric with $\Theta$ satisfying (\ref{eq:counter_ex}). Namely let $g:\R^2\to\R^{2\times 2}$ be the metric, whose inverse metric $g^{-1}$ is given by,
$$g^{-1}(x_1,x_2)=I+x_1\left( \begin{array}{cc}
1 & 0 \\
0 & 0 
\end{array} \right)+x_2\left( \begin{array}{cc}
0 & 1 \\
1 & 0 
\end{array} \right)+\frac{x_2^2}{2}\left( \begin{array}{cc}
1 & 0 \\
0 & 0 
\end{array} \right).$$
It is clear that $g(0)=g^{-1}(0)=I$, and using that
$$\Theta_{jk}^{g^{-1}}=-g\frac{\partial g^{-1}}{\partial x_k}g\frac{\partial g^{-1}}{\partial x_j}+g\frac{\partial^2 g^{-1}}{\partial x_j\partial x_k}$$
we get
$$\begin{array}{l}
\Theta_{11}^{g^{-1}}(0)=-\left( \begin{array}{cc}
1 & 0 \\
0 & 0 
\end{array} \right)^2=\left( \begin{array}{cc}
-1 & 0 \\
0 & 0 
\end{array} \right),\vspace{0.1cm}\\
\Theta_{12}^{g^{-1}}(0)=-\left( \begin{array}{cc}
0 & 1 \\
1 & 0 
\end{array} \right)\left( \begin{array}{cc}
1 & 0 \\
0 & 0 
\end{array} \right)=\left( \begin{array}{cc}
0 & 0 \\
-1 & 0 
\end{array} \right),\vspace{0.1cm}\\
\Theta_{21}^{g^{-1}}(0)=-\left( \begin{array}{cc}
1 & 0 \\
0 & 0 
\end{array} \right)\left( \begin{array}{cc}
0 & 1 \\
1 & 0 
\end{array} \right)=\left( \begin{array}{cc}
0 & -1 \\
0 & 0 
\end{array} \right),\vspace{0.1cm}\\
\Theta_{22}^{g^{-1}}(0)=-\left( \begin{array}{cc}
0 & 1 \\
1 & 0 
\end{array} \right)^2+\left( \begin{array}{cc}
1 & 0 \\
0 & 0 
\end{array} \right)=\left( \begin{array}{cc}
0 & 0 \\
0 & -1 
\end{array} \right).
\end{array}$$
Altogether we see that
$$\Theta^{g^{-1}}_{jk}(0)=\left\{\begin{array}{cl}
-1 & \textrm{row $k$, column $j$}, \\
0 & \textrm{otherwise}, 
\end{array}\right.$$
and hence $\Theta^g_{jk}(0)$ satisfies (\ref{eq:counter_ex}).\qed
\end{ex}

Constructing metrics that are Nakano log concave, which is needed for Theorem \ref{thm:main}, is a priori not an easy task. In the Griffiths case, however, the situation is quite different. In Proposition \ref{prop:altG} below, we will give an alternative characterization of Griffiths log convex metrics, which combined with Proposition \ref{prop:1} (ii), facilitates the construction of Griffiths log concave metrics considerably.

Given this fact it is then natural to ask if it is possible to construct a metric that is log concave in the sense of Nakano, out of a given metric which is log concave in the sense of Griffiths. In the complex-variable setting, a very elegant solution to this problem is provided by a celebrated theorem due to Demailly and Skoda (\cite{DS}). Reformulated to our real-variable setting, this theorem states the following.

\begin{thm}\label{thm:DS}
Let $g:\R^n\to\C^{r\times r}$ be a metric. If $g$ is log concave in the sense of Griffiths, then $g\cdot\det g$ is log concave in the sense of Nakano.
\end{thm}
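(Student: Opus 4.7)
The plan is to imitate the complex-analytic Demailly-Skoda theorem, whose statement is structurally identical: a Griffiths positively curved hermitian vector bundle, twisted by the determinant line bundle with its induced metric, is Nakano positive. The quickest route is to appeal to the complex version directly. Given $g:\R^n\to\C^{r\times r}$, define $h:\C^n_z\to\C^{r\times r}$ by $h(z):=g(\operatorname{Im} z)$, a hermitian metric on the trivial bundle over $\C^n$. Unwinding the definitions in the spirit of the reductions of Section 2, $g$ is Griffiths log concave iff $h$ is Griffiths positively curved, and $g\cdot\det g$ is Nakano log concave iff $h\cdot\det h$ is Nakano positively curved. The theorem then follows at once from \cite{DS}.

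For a proof staying within the real framework, my plan is to first compute the curvature of $g\cdot\det g$ explicitly. Setting $\phi:=\det g$, the identity $(\phi g)^{-1}\partial_j(\phi g)=\phi^{-1}(\partial_j\phi)I+g^{-1}\partial_j g$ immediately gives
\[
\Theta^{g\cdot\det g}_{jk}=\Theta^g_{jk}+(\partial_j\partial_k\log\det g)\,I,
\]
and (\ref{eq:LA}) rewrites the correction as $\operatorname{tr}(\Theta^g_{kj})\,I$. Since the statement is pointwise, fix $x_0\in\R^n$ and choose a basis of $\C^r$ in which $g(x_0)=I$. After dividing by the positive scalar $\det g(x_0)$, the Nakano log concavity of $g\cdot\det g$ at $x_0$ reduces to the algebraic inequality
\[
\sum_{j,k=1}^n\langle\Theta^g_{jk}u_j,u_k\rangle+\sum_{j,k=1}^n\operatorname{tr}(\Theta^g_{kj})\,\langle u_j,u_k\rangle\leq 0
\]
for every tuple $\{u_j\}_{j=1}^n\subset\C^r$, under the Griffiths hypothesis that $(v,u)\mapsto\sum_{j,k}\langle\Theta^g_{jk}u,u\rangle v_j\bar v_k$ is nonpositive on $\C^n\times\C^r$.

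The hard part is this algebraic lemma, which is precisely the content of Demailly-Skoda and is far from elementary. My plan for it is to follow Demailly's averaging argument: expand the $n\times r$ matrix $U=(u_{j\alpha})$ via a singular value decomposition, apply the Griffiths inequality to each rank-one component, and exploit Proposition \ref{prop:1}(i) together with the Schur-type positivity of Hadamard products used in Example \ref{ex:2} to recognize the sum of the resulting contributions as precisely the Nakano expression plus the trace correction with the correct sign. Once the algebraic inequality is established, restoring the positive factor $\det g$ yields the Nakano log concavity of $g\cdot\det g$. I expect the bookkeeping in this last step to be the main obstacle, since the interaction of the Griffiths hypothesis with the trace of the curvature is exactly what makes Demailly-Skoda a nontrivial theorem even in the complex setting.
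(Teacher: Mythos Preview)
Your first route---passing to $h(z):=g(\operatorname{Im} z)$ and invoking the complex Demailly--Skoda theorem---is correct and is exactly what the paper itself notes can be done before giving its own direct argument.

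For the real-variable argument, your setup matches the paper: the curvature formula $\Theta^{g\det g}_{jk}=\Theta^g_{jk}+\operatorname{tr}(\Theta^g_{jk})\,I$, the pointwise reduction to $g(x_0)=I$, and the identification of the algebraic inequality to be proved are all the same. The gap is in your sketch of the algebraic lemma. What you call ``Demailly's averaging argument'' is not a singular value decomposition of $U=(u_{j\alpha})$; it is a discrete Fourier average over $U_q^r=\{(e^{2\pi ik_1/q},\ldots,e^{2\pi ik_r/q})\}$ (the paper's Lemma~\ref{lma:1}). One applies the Griffiths inequality to each $\sigma\in U_q^r$ in the expression
\[
\frac{1}{q^r}\sum_{\sigma\in U_q^r}\sum_{j,k}(\Theta^g_{jk}\sigma,\sigma)(u_j,\sigma)\overline{(u_k,\sigma)}\leq 0,
\]
and the lemma identifies this average as the Nakano form for $g\det g$ minus a term $\sum_\alpha\sum_{j,k}(\Theta^g_{jk})_{\alpha\alpha}u_{j\alpha}\bar u_{k\alpha}$, which is itself nonpositive by Griffiths applied to the basis vectors $e_\alpha$. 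Your SVD sketch, by contrast, produces cross terms between distinct singular directions $\ell\neq m$ of the form $\sum_{j,k}v^{(\ell)}_j\bar v^{(m)}_k\langle\Theta_{jk}w^{(\ell)},w^{(m)}\rangle$, and Griffiths positivity says nothing about these; the Schur product trick from Example~\ref{ex:2} does not control them either. So as written, that step would not close.
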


Since this theorem holds in the complex-variable setting, by interpreting the metric $g:\R^n\to\C^{r\times r}$ as a metric $h:\C^n_z\to\C^{r\times r}$ that does not depend on the real part of $z$, it will also hold in the real-variable setting. Nevertheless, for the sake of completeness, we have still chosen to include a proof. The argument is the same as the one in \cite{D}, Theorem 8.2, Chapter VII; it is only adapted to our real-valued setting and our matrix oriented notation.

The proof of Theorem \ref{thm:DS} relies heavily on the following discrete Forurier transform type of lemma.

\begin{lma}\label{lma:1}
Let $q\geq3$ be an integer and let $x,y\in\C^r$ be two vectors. Let furthermore
$$U_q^r:=\Big\{(e^{2\pi ik_1/q},\ldots,e^{2\pi ik_r/q});\ k_1,\ldots,k_r\in\{0,\ldots,q-1\}\Big\}$$
and, for $\sigma\in U_q^r$, put
$$(x,\sigma):=\sum_{\mu=1}^rx_\mu\bar\sigma_\mu,\qquad(y,\sigma):=\sum_{\mu=1}^ry_\mu\bar\sigma_\mu.$$
Then, for any pair of integers $(\alpha,\beta)$ with $1\leq\alpha,\beta\leq r$, the following identity holds:
$$\frac{1}{q^r}\sum_{\sigma\in U_q^r}(x,\sigma)\overline{(y,\sigma)}\sigma_\alpha\bar\sigma_\beta=\left\{\begin{array}{cl}
x_\alpha\bar y_\beta & \textrm{if }\alpha\neq\beta, \\
\sum_{\mu=1}^rx_\mu\bar y_\mu & \textrm{if }\alpha=\beta. 
\end{array}\right.$$
\end{lma}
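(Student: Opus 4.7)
The plan is to expand the bilinear forms, reduce the computation to character sums over the $r$-th power of the cyclic group $\Z/q\Z$, and then exploit the fact that the condition $q\geq3$ forces a modular vanishing condition to become an honest equality.

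First I would expand
$$(x,\sigma)\overline{(y,\sigma)}\sigma_\alpha\bar\sigma_\beta=\sum_{\mu,\nu=1}^r x_\mu\bar y_\nu\,\bar\sigma_\mu\sigma_\nu\sigma_\alpha\bar\sigma_\beta,$$
so that, after interchanging the finite sums, the whole identity reduces to computing
$$S_{\mu\nu\alpha\beta}:=\frac{1}{q^r}\sum_{\sigma\in U_q^r}\bar\sigma_\mu\sigma_\nu\sigma_\alpha\bar\sigma_\beta.$$
Since each coordinate $\sigma_j$ of $\sigma$ runs independently over the $q$-th roots of unity, this average factorises into a product over $j=1,\ldots,r$. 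For each $j$ one gets the single-variable average
$$\frac{1}{q}\sum_{k=0}^{q-1}e^{2\pi i k\,m_j/q},\qquad m_j:=-\mathbf{1}_{\{\mu=j\}}+\mathbf{1}_{\{\nu=j\}}+\mathbf{1}_{\{\alpha=j\}}-\mathbf{1}_{\{\beta=j\}},$$
which equals $1$ if $m_j\equiv 0\pmod q$ and $0$ otherwise.

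The crucial observation, and the only place where $q\geq 3$ is used, is that each $m_j\in\{-2,-1,0,1,2\}$, so $m_j\equiv 0\pmod q$ forces $m_j=0$ outright. Thus $S_{\mu\nu\alpha\beta}=1$ exactly when $m_j=0$ for every $j$, i.e.\ when the multiset $\{\nu,\alpha\}$ coincides with $\{\mu,\beta\}$ index by index.

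It remains to read off which pairs $(\mu,\nu)$ survive. In the case $\alpha\neq\beta$, inspecting $j=\beta$ forces $\nu=\beta$ and $\mu\neq\beta$, and inspecting $j=\alpha$ then forces $\mu=\alpha$; the remaining $j$'s are automatic. Only the term $(\mu,\nu)=(\alpha,\beta)$ contributes, yielding $x_\alpha\bar y_\beta$. In the case $\alpha=\beta$, the indicator conditions collapse to $m_j=-\mathbf{1}_{\{\mu=j\}}+\mathbf{1}_{\{\nu=j\}}=0$ for all $j$, i.e.\ $\mu=\nu$, and summing over $\mu$ gives $\sum_{\mu=1}^r x_\mu\bar y_\mu$. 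I expect no genuine obstacle: the only subtlety worth highlighting is the role of the hypothesis $q\geq 3$, without which $m_j=\pm 2$ could spuriously contribute and destroy the identity.
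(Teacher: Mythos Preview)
Your argument is correct: the expansion into a double sum over $\mu,\nu$, the factorisation over coordinates, the orthogonality of characters on $\Z/q\Z$, and the observation that $q\geq 3$ turns the congruence $m_j\equiv 0$ into the equality $m_j=0$ are exactly what is needed, and your case analysis for $\alpha\neq\beta$ versus $\alpha=\beta$ is clean and accurate. The paper itself does not supply a proof of this lemma but simply refers the reader to Demailly's monograph (Chapter~VII, Lemma~8.3); the argument there is the same discrete Fourier/character-sum computation you have carried out, so your approach matches the intended one.
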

For a proof of this lemma see \cite{D} Lemma 8.3, Chapter VII. (This might seem to contradict what we just said about a complete proof, but the notation in the proof of the lemma in \cite{D} is standard, while the notation in the proof of the theorem is completely different from ours.)

\begin{proof}[Proof of Theorem \ref{thm:DS}]
Let $\tilde{g}=g\det g$. Then, using (\ref{eq:LA}) above, a short computation shows that
{\setlength\arraycolsep{2pt}
\begin{eqnarray}
\Theta_{jk}^{\tilde{g}}&=&\frac{\partial}{\partial x_j}\left(\frac{g^{-1}}{\det g}\frac{\partial}{\partial x_k}(g\det g)\right)=\Theta_{jk}^g+\frac{\partial^2 \log\det g}{\partial x_j\partial x_k}\cdot I=\nonumber\\
&=&\Theta_{jk}^g+tr\big(\Theta_{jk}^g\big)\cdot I.\nonumber
\end{eqnarray}}
\!\!Hence, what we want to show is that for arbitrary vectors $u\in\C^r$, $v\in\C^n$, and $\{u_j\}_{j=1}^n\subset\C^r$, if
$$\sum_{j,k=1}^n\big(\Theta_{jk}^gu,\bar u\big)_gv_j\bar v_k\leq0,$$
then this implies that
$$\sum_{j,k=1}^n\Big(\big(\Theta_{jk}^g+tr(\Theta_{jk}^g)\big)u_j,u_k\Big)_{g\det g}\leq0.$$

First of all we note that, since this statement is pointwise, we can without any loss of generality assume that we are working in an ortonormal basis for $g$, so that $g=I$.

Now let $q$ and $U_q^r$ be as in Lemma \ref{lma:1}, and let us study the expression
\be\label{eq:GN}
\frac{1}{q^r}\sum_{\sigma\in U_q^r}\sum_{j,k=1}^n\big(\Theta_{jk}^g\sigma,\sigma\big)(u_j,\sigma)\overline{(u_k,\sigma)},
\ee
where all inner products are with respect to the euclidean metric. Since $g$ is log concave in the sense of Griffiths, the inner sum, and hence the entire expression, is negative. On the other hand, we can switch the order of summation and apply Lemma \ref{lma:1}. However, to avoid proliferation of indices, we first do this with $\Theta_{jk}^g$
replaced by an arbitrary matrix $A\in\R^{r\times r}$, and $u_j, u_k$ replaced by the (real) vectors $x$ and $y$ respectively, i.e. we study the expression
$$\frac{1}{q^r}\sum_{\sigma\in U_q^r}(A\sigma,\sigma)(x,\sigma)\overline{(y,\sigma)}.$$

If we write
$$(A\sigma,\sigma)=\sum_{\alpha,\beta=1}^rA_{\alpha\beta}\sigma_\alpha\bar\sigma_\beta,$$
we get that
$$\frac{1}{q^r}\sum_{\sigma\in U_q^r}(A\sigma,\sigma)(x,\sigma)\overline{(y,\sigma)}=\sum_{\alpha,\beta=1}^rA_{\alpha\beta}\Big(\frac{1}{q^r}\sum_{\sigma\in U_q^r}(x,\sigma)\overline{(y,\sigma)}\sigma_\alpha\bar\sigma_\beta\Big).$$
Thus, applying Lemma \ref{lma:1} and rewriting slightly, we end up with
{\setlength\arraycolsep{2pt}
\begin{eqnarray}
&&\frac{1}{q^r}\sum_{\sigma\in U_q^r}(A\sigma,\sigma)(x,\sigma)\overline{(y,\sigma)}=\sum_{\alpha\neq\beta}A_{\alpha\beta}x_\alpha\bar y_\beta+tr(A)\sum_{\alpha=1}^rx_\alpha\bar y_\alpha=\nonumber\\
&&\qquad\qquad=\Big(\big(A+tr(A)\big)x,y\Big)-\sum_{\alpha=1}^rA_{\alpha\alpha}x_\alpha\bar y_\alpha.\nonumber
\end{eqnarray}}

Returning to the original expression (\ref{eq:GN}), we see that this, together with the log concavity in the sense of Griffiths, implies that
{\setlength\arraycolsep{2pt}
\begin{eqnarray}
0&\geq&\frac{1}{q^r}\sum_{\sigma\in U_q^r}\sum_{j,k=1}^n\big(\Theta_{jk}^g\sigma,\sigma\big)(u_j,\sigma)\overline{(u_k,\sigma)}=\nonumber\\
&&\qquad=\sum_{j,k=1}^n\Big(\big(\Theta_{jk}^g+tr(\Theta_{jk}^g)\big)u_j,u_k\Big)-\sum_{j,k=1}^n\sum_{\alpha=1}^r\big(\Theta^g_{jk}\big)_{\alpha\alpha}u_{j\alpha}\bar u_{k\alpha}.\nonumber
\end{eqnarray}}
\!\!where $u_j=(u_{j1},\ldots,u_{jr})$. The last term can now be rewritten as
$$-\sum_{j,k=1}^n\sum_{\alpha=1}^r\big(\Theta^g_{jk}\big)_{\alpha\alpha}u_{j\alpha}\bar u_{k\alpha}=-\sum_{\alpha=1}^r\sum_{j,k=1}^n\big(\Theta^g_{jk}e_\alpha,e_\alpha\big)u_{j\alpha}\bar u_{k\alpha}\geq0,$$
where $\{e_\alpha\}_{\alpha=1}^r$ is an orthonormal basis for $g$, and the positivity once again follows from the log concavity of $g$ in the sense of Griffiths. Hence,
$$\sum_{j,k=1}^n\Big(\big(\Theta_{jk}^g+tr(\Theta_{jk}^g)\big)u_j,u_k\Big)\leq0,$$
which is what we wanted to prove.
\end{proof}

Finally, we have the alternative characterization of Griffiths log convexity alluded to above. In the complex-variable setting, it says the following, (see e.g. \cite{B}, section 2, for a proof).

\begin{prop}\label{prop:altG}
Let $h:\C^n\to\C^{r\times r}$ be a hermitian metric. Then $h$ is negatively curved in the sense of Griffiths if and only if
$$\log\|u\|^2_h$$
is plurisubharmonic for every holomorphic function $u$.
\end{prop}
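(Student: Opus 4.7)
The plan is to reduce the proposition to a single pointwise identity, due essentially to Berndtsson, that expresses $i\partial\dbar\log\|u\|^2_h$ for a local holomorphic section $u$ in terms of the Chern curvature $\Theta_h := \dbar(h^{-1}\partial h)$ and the $(1,0)$-part of the Chern connection $D'u := h^{-1}\partial(hu)$. The identity reads
\[
i\partial\dbar\log\|u\|^2_h \;=\; -\,\frac{\langle i\Theta_h u,u\rangle_h}{\|u\|^2_h} \;+\; \frac{\|u\|^2_h\,\|D'u\|^2_h - |\langle D'u,u\rangle_h|^2}{\|u\|^4_h},
\]
and its crucial feature is that the last fraction is a \emph{semipositive} $(1,1)$-form, being a Gram-type determinant in the pair $(u, D'u)$. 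I would derive it by setting $\psi := u^*hu$, writing $\partial\dbar\log\psi = (\partial\dbar\psi)/\psi - (\partial\psi\wedge\dbar\psi)/\psi^2$, expanding each piece using $\dbar u = 0$ and $\partial u^* = 0$, and rewriting $\dbar\partial h$ through the identity $\dbar\partial h = h\Theta_h + (\dbar h)h^{-1}(\partial h)$ that drops straight out of the definition of $\Theta_h$.

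With the identity in hand, the implication $(\Rightarrow)$ is immediate: Griffiths negative curvature says $-\langle i\Theta_h u,u\rangle_h \geq 0$ as a $(1,1)$-form for every holomorphic $u$, and adding the nonnegative Cauchy--Schwarz term gives $i\partial\dbar\log\|u\|^2_h \geq 0$ on the open set $\{u\neq 0\}$. To upgrade this to plurisubharmonicity on the whole domain I would invoke the standard removable-singularity property: $\log\|u\|^2_h$ is upper semicontinuous with value $-\infty$ on the pluripolar set $\{u=0\}$, locally integrable, and plurisubharmonic on the complement, hence plurisubharmonic globally.

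For $(\Leftarrow)$, I would argue pointwise, exploiting the freedom to tailor the test section. Given $z_0\in\C^n$, $u_0\in\C^r\setminus\{0\}$ and a direction $\xi\in\C^n$, the explicit affine section
\[
u(z) := u_0 \;-\; \bigl[h^{-1}(\partial h)\bigr](z_0)\,u_0\cdot (z-z_0)
\]
satisfies $u(z_0)=u_0$ and $D'u(z_0)=0$, which kills the Cauchy--Schwarz contribution at $z_0$. The identity then reduces at $z_0$ to
\[
\bigl(i\partial\dbar\log\|u\|^2_h\bigr)(\xi,\bar\xi)\big|_{z_0} \;=\; -\,\frac{\langle i\Theta_h(\xi,\bar\xi)\,u_0,u_0\rangle_h}{\|u_0\|^2_h}.
\]
The hypothesis that $\log\|u\|^2_h$ is plurisubharmonic forces the left-hand side to be nonnegative, so $\langle i\Theta_h(\xi,\bar\xi) u_0, u_0\rangle_h \leq 0$; as $(z_0, u_0, \xi)$ was arbitrary, this is precisely Griffiths negativity of $h$.

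The main obstacle is the first step, the derivation and sign bookkeeping of the fundamental identity — in particular, recognizing that the non-curvature remainder really is a semipositive $(1,1)$-form (a $2\times 2$ Gram determinant in $u$ and $D'u$) rather than a generic combination. Once that algebraic step is in place, both implications reduce to short, formal manipulations: a Cauchy--Schwarz observation on one side, and a one-parameter choice of test section on the other.
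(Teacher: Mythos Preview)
Your argument is correct and is the standard one: the key identity expressing $i\partial\dbar\log\|u\|^2_h$ as the Griffiths curvature term plus a nonnegative Cauchy--Schwarz defect, followed by the choice of an affine test section with $D'u(z_0)=0$ for the converse, is exactly the classical route. Note, however, that the paper does not give its own proof of this proposition; it simply cites \cite{B}, section~2, so there is no in-text argument to compare against---your proof is precisely the one the reader is being referred to.
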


We have unfortunately not managed to find any real-variable analogue of this property. The main difficulty have been to find an appropriate replacement for the holomorphic functions $u$.

Nevertheless, Proposition \ref{prop:altG} is still very useful even in the real-variable setting, (if we regard our metrics as hermitian metrics as described in the beginning of this section). For example, combined with Proposition \ref{prop:1} (ii), these results immediately yield that the sum of two Griffiths log convex metrics is also Griffiths log convex; a fact that is not very easy to verify by direct computation.

\section{The proof of Theorem \ref{thm:PW}}
\noindent As mentioned in the introduction, the scalar-valued case of Theorem \ref{thm:PW} was shown by Jakob Hultgren in his master's thesis. The main ideas in our proof are the same as in the thesis, but since this thesis is not properly published, we have chosen to include a complete proof here as well.

We start by proving that if $f\in L^2(\tilde{g})$, i.e. $f:\R^n\to\C^r$ is such that
\be\label{eq:f}
\int_{\R^n}\|f(\xi)\|^2_{\tilde{g}(\xi)}dV(\xi)<\infty
\ee
then the function $F:\C^n\to\C^r$ defined through
\be\label{eq:Ff}
F(z):=\int_{\R^n}f(\xi)e^{-i\xi\cdot z}dV(\xi)
\ee
is in $A^2(g)$, i.e. $F$ is holomorphic and
\be\label{eq:F}
\int_{\C^n}\|F(z)\|^2_{g(y)}dV(z)<\infty.
\ee

To show that each vector element $F_j$ of $F$ is holomorphic, we note that it is enough to show that $F_j$ restricted to any complex line in $\C^n$ is holomorphic. We apply Morera's theorem to prove this latter statement. Hence, we let $\gamma$ denote a triangle in a complex line in $\C^n$ and study
\be\label{eq:M}
\int_{\gamma}F_j(z)dz=\int_{\gamma}\bigg(\int_{\R^n}f_j(\xi)e^{-i\xi\cdot z}dV(\xi)\bigg)dz.
\ee
As $e^{-i\xi\cdot z}$ is holomorphic in $z$, we would be done if we could apply Fubini's theorem. We now turn to justifying this, which turns out to be much more involved than one might expect at first sight.

To apply Fubini's theorem in (\ref{eq:M}) we need to show that $f_j(\xi)e^{-i\xi\cdot z}$ is integrable with respect to both $z$ and $\xi$. However, since $\gamma$ is compact, it suffices to prove this locally in $z$. Hence, we will show that for any fix $z_0=x_0+iy_0\in\C^n$, there exists a function $G_j\in L^1(\R^n)$ such that
$$|f_j(\xi)e^{-i\xi\cdot z}|\leq G_j(\xi)$$
in a neighborhood of $z_0$.

For this, we start by noting that
$$|f_j(\xi)e^{-i\xi\cdot z}|=|f_j(\xi)|e^{\xi\cdot y}=|f_j(\xi)|e^{\xi\cdot(y-y_0)}e^{\xi\cdot y_0}\leq|f_j(\xi)|e^{\xi\cdot y_0}e^{\varepsilon|\xi|/2}$$
for all $z=x+iy\in\C^n$ such that $|y-y_0|<\varepsilon/2$, for some $\varepsilon>0$. We set $G_j(\xi):=|f_j(\xi)|e^{\xi\cdot y_0}e^{\varepsilon|\xi|/2}$.

To show that $G_j$ is integrable, we begin by showing that $f\in L^2(\tilde{g})$ implies that
\be\label{eq:ae}
\int_{\R^n}|f_j(\xi)|^2e^{2\xi\cdot\tilde{y}}dV(\xi)<\infty,
\ee
for all $j=1,\ldots,r$ and every $\tilde{y}\in\R^n$.

For this we let $\tilde{y}\in\R^n$ be arbitrary and note that since $g$ is assumed to be continuous and $g(\tilde{y})$ is strictly positive definite, there exists $\tilde{\varepsilon},\delta>0$, such that $y\in B_\delta(\tilde{y})$ implies that $g(y)\geq\tilde{\varepsilon}I$. Using this in the definition of $\tilde{g}$ yields
{\setlength\arraycolsep{2pt}
\begin{eqnarray}
\tilde{g}(\xi)=(2\pi)^n\int_{\R^n}e^{2\xi\cdot y}g(y)dV(y)&\geq&(2\pi)^n\int_{B_\delta(\tilde{y})}e^{2\xi\cdot y}g(y)dV(y)\geq\nonumber\\
&\geq&(2\pi)^n\tilde{\varepsilon}I\int_{B_\delta(\tilde{y})}e^{2\xi\cdot y}dV(y).\nonumber
\end{eqnarray}}
\!\!By normalizing, applying Jensen's inequality, and using the definition of barycenter, we can further rewrite this as
{\setlength\arraycolsep{2pt}
\begin{eqnarray}
\tilde{g}(\xi)&\geq&(2\pi)^n\tilde{\varepsilon}|B_\delta(\tilde{y})|I\int_{B_\delta(\tilde{y})}e^{2\xi\cdot y}\frac{dV(y)}{|B_\delta(\tilde{y})|}\geq\nonumber\\
&\geq&(2\pi)^n\tilde{\varepsilon}|B_\delta(\tilde{y})|I\exp\bigg(2\int_{B_\delta(\tilde{y})}\xi\cdot y\frac{dV(y)}{|B_\delta(\tilde{y})|}\bigg)\geq(2\pi)^n\tilde{\varepsilon}|B_\delta(\tilde{y})|e^{2\xi\cdot\tilde{y}}I.\nonumber
\end{eqnarray}}
\!\!Thus,
{\setlength\arraycolsep{2pt}
\begin{eqnarray}
\int_{\R^n}\|f(\xi)\|^2_{\tilde{g}(\xi)}dV(\xi)&\geq&(2\pi)^n\tilde{\varepsilon}|B_\delta(\tilde{y})|\int_{\R^n}e^{2\xi\cdot \tilde{y}}\|f(\xi)\|^2_{I}dV(\xi)=\nonumber\\
&=&(2\pi)^n\tilde{\varepsilon}|B_\delta(\tilde{y})|\sum_{j=1}^r\int_{\R^n}|f_j(\xi)|^2e^{2\xi\cdot\tilde{y}}dV(\xi).
\end{eqnarray}}
\!\!This proves (\ref{eq:ae}).

Now decompose $\R^n$ into a finite number of cones, $\{\Gamma_k\}_{k=1}^m$, with vertex at the origin. This decomposition should be made in such a way that if $z_1,z_2\in\Gamma_k$, then
\be\label{eq:angle}
z_1\cdot z_2>\frac{|z_1||z_2|}{2}.
\ee
This simply means that the cosinus of the angle between $z_1$ and $z_2$ must be greater than $1/2$, i.e. the angle between any two points in $\Gamma_k$ must be less than $\pi/3$.

Hence, if we choose $y_k\in\Gamma_k$ with $|y_k|\geq1$, then by (\ref{eq:ae}) and (\ref{eq:angle})
{\setlength\arraycolsep{2pt}
\begin{eqnarray}\label{eq:finite}
&&\int_{\Gamma_k}|f_j(\xi)|^2e^{2\xi\cdot y_0}e^{|\xi|}dV(\xi)\leq\int_{\Gamma_k}|f_j(\xi)|^2e^{2\xi\cdot y_0}e^{|\xi||y_k|}dV(\xi)\leq\nonumber\\
&&\qquad\leq\int_{\Gamma_k}|f_j(\xi)|^2e^{2\xi\cdot(y_0+y_k)}dV(\xi)\leq\int_{\R^n}|f_j(\xi)|^2e^{2\xi\cdot(y_0+y_k)}dV(\xi)<\infty.
\end{eqnarray}}

By the Cauchy-Schwarz inequality
{\setlength\arraycolsep{2pt}
\begin{eqnarray}
&&\int_{\R^n}G_j(\xi)dV(\xi)=\int_{\R^n}|f_j(\xi)|e^{\xi\cdot y_0}e^{\varepsilon|\xi|}e^{-\varepsilon|\xi|/2}dV(\xi)\leq\nonumber\\
&&\qquad\leq\bigg(\int_{\R^n}|f_j(\xi)|^2e^{2\xi\cdot y_0}e^{2\varepsilon|\xi|}dV(\xi)\bigg)^{1/2}\bigg(\int_{\R^n}e^{-\varepsilon|\xi|}dV(\xi)\bigg)^{1/2}.\nonumber
\end{eqnarray}}
\!\!The second factor here is finite for all $\varepsilon>0$, and by decomposing $\R^n$, choosing $\varepsilon=1/2$, and using (\ref{eq:finite}), we get that,
$$\int_{\R^n}|f_j(\xi)|^2e^{2\xi\cdot y_0}e^{|\xi|}dV(\xi)=\sum_{k=1}^m\int_{\Gamma_k}|f_j(\xi)|^2e^{2\xi\cdot y_0}e^{|\xi|}dV(\xi)<\infty.$$
Thus $G_j$ is integrable, which shows that $F_j$ is holomorphic for any $j=1,\ldots,r$.

It remains to show that $F\in A^2(g)$, i.e. square integrable with respect to $g$. By definition, for fix $y\in\R^n$, $F_y(x):=F(x+iy)$ is just the Fourier transform of $e^{\xi\cdot y}f(\xi)$. By (\ref{eq:ae}), $e^{\xi\cdot y}f_j(\xi)\in L^2(\R^n)$ for any $j=1,\ldots,r$, and so by Parseval's formula, $F_{j,y}\in L^2(\R^n)$ as well. In particular then, for any $j,k=1,\ldots,r$,
$$f_j(\xi)\bar{f}_k(\xi)e^{2\xi\cdot y}\in L^1(\R^n)\quad\textrm{ and }\quad F_{j,y}(x)\bar{F}_{k,y}(x)\in L^1(\R^n).$$ 
Hence, we can exchange summation and integration, and use Plancherel's formula together with the Fubini-Tonelli theorem to deduce that
{\setlength\arraycolsep{2pt}
\begin{eqnarray}\label{eq:Pars}
&&\int_{\C^n}\|F(z)\|^2_{g(y)}dV(z)=\!\!\sum_{j,k=1}^r\int_{\R^n}\bigg(\int_{\R^n}\!\!F_{j,y}(x)\bar{F}_{k,y}(x)dV(x)\bigg)g_{jk}(y)dV(y)=\nonumber\\
&&\qquad\qquad=(2\pi)^n\sum_{j,k=1}^r\int_{\R^n}\bigg(\int_{\R^n}f_j(\xi)\bar{f}_k(\xi)e^{2\xi\cdot y}dV(\xi)\bigg)g_{jk}(y)dV(y)=\nonumber\\
&&\qquad\qquad=(2\pi)^n\int_{\R^n}\bigg(\int_{\R^n}e^{2\xi\cdot y}\|f(\xi)\|^2_{g(y)}dV(\xi)\bigg)dV(y)=\nonumber\\
&&\qquad\qquad=\sum_{j,k=1}^r\int_{\R^n}f_j(\xi)\bar{f}_k(\xi)\bigg((2\pi)^n\int_{\R^n}e^{2\xi\cdot y}g_{jk}(y)dV(y)\bigg)dV(\xi)=\nonumber\\
&&\qquad\qquad=\sum_{j,k=1}^r\int_{\R^n}f_j(\xi)\bar{f}_k(\xi)\tilde{g}_{jk}(\xi)dV(\xi)=\int_{\R^n}\|f(\xi)\|^2_{\tilde{g}(\xi)}dV(\xi).
\end{eqnarray}}
\!\!This shows that $f\in L^2(\tilde{g})$ implies that $F\in A^2(g)$.

We now turn to the converse problem. Hence, we assume that $F\in A^2(g)$ is given and want to construct a function $f\in L^2(\tilde{g})$ such that (\ref{eq:Ff}) holds.

As before, set $F_y(x):=F(x+iy)$ for fix $y\in\R^n$. We claim that $f(\xi):=\F^{-1}(F_0)(\xi)$, the inverse Fourier transform of $F_0$, which seems natural considering (\ref{eq:Ff}). Our plan is to prove this in two steps. First we must show that $F_0$ is in $L^2$, so that the inverse Fourier transform is well-defined. After this we show that $f:=\F^{-1}(F_0)\in L^2(\tilde{g})$. It then follows from the first part of this proof that the function
$$G(z)=\int_{\R^n}f(\xi)e^{-i\xi\cdot z}dV(\xi)$$
is holomorphic in $z$. Since $F(x)=G(x)$ for $x\in\R^n$, by analytic continuation
$$F(z)=\int_{\R^n}f(\xi)e^{-i\xi\cdot z}dV(\xi)$$
everywhere, and we are done. 

The following lemma will be central.

\begin{lma}\label{lma:4.1}
Let $G:\C^n\to\R$ be a continuous, subharmonic, and strictly positive function. If
\be\label{eq:lma}
\int_{\R^n\times K}G(x+iy)dV(x,y)<\infty
\ee
for all compact sets $K\subset\R^n$, then
$$\int_{\R^n}G(x+iy)dV(x)$$
is continuous in $y$.
\end{lma}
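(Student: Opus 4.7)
The strategy is to apply the dominated convergence theorem to $H(y) := \int_{\R^n} G(x+iy) \, dV(x)$. Since $G$ is continuous, $G(x+iy) \to G(x+iy_0)$ pointwise in $x$ as $y \to y_0$, so it suffices to produce an integrable majorant $F(x) \geq G(x+iy)$ valid for all $y$ in some neighborhood of $y_0$.

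The natural candidate arises from the sub-mean value inequality for subharmonic functions. Fix $r > 0$ and restrict attention to $y$ with $|y-y_0| < r/2$. Since the Euclidean ball $B_r(x+iy) \subset \C^n$ is contained in $B_{3r/2}(x+iy_0)$, the subharmonicity of $G$ gives
$$G(x+iy) \leq \frac{1}{|B_r|}\int_{B_r(x+iy)} G(w) \, dV(w) \leq \frac{1}{|B_r|}\int_{B_{3r/2}(x+iy_0)} G(w) \, dV(w) =: F(x),$$
which is manifestly independent of $y$.

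It remains to verify that $F \in L^1(\R^n)$. Using $G \geq 0$ to interchange the order of integration via Fubini, and noting that for fixed $(x',y')$ with $|y'-y_0| \leq 3r/2$ the $x$-slice $\{x \in \R^n : |x-x'|^2 + |y'-y_0|^2 \leq (3r/2)^2\}$ has $n$-dimensional volume uniformly bounded by a constant $C = C(n,r)$, one obtains
$$\int_{\R^n} F(x) \, dV(x) \leq \frac{C}{|B_r|}\int_{\R^n \times \overline{B_{3r/2}(y_0)}} G(x'+iy') \, dV(x',y'),$$
and the right-hand side is finite by the hypothesis (\ref{eq:lma}). I expect the key step to be precisely this upgrade from the local integrability assumption on $(x,y)$ to a pointwise-in-$x$ bound that is uniform in $y$; subharmonicity is used in exactly this place. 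Once $F$ is in hand, the dominated convergence theorem yields $H(y) \to H(y_0)$, and since $y_0 \in \R^n$ was arbitrary, $H$ is continuous.
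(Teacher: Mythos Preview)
Your proof is correct and follows essentially the same route as the paper: dominated convergence with a majorant produced from the sub-mean value inequality over a complex ball, enlarged to be centered at $x+iy_0$ and hence independent of $y$, whose $L^1$-norm is then bounded via Fubini and the hypothesis~\eqref{eq:lma}. The only cosmetic difference is that the paper first bounds the complex ball by a product of real balls $B_{\tilde r}^\R \times B_{\tilde r}^\R$ before enlarging the $y$-factor, whereas you enlarge directly to $B_{3r/2}(x+iy_0)$; the two packagings are equivalent.
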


\begin{proof}
Fix $y_0\in\R^n$ arbitrarily. We are going to use dominated convergence and so we want to find a function $G_{y_0}\in L^1(\R^n)$, such that
\be\label{eq:dom}
G(x,y)\leq G_{y_0}(x)
\ee
for all $y$ in a neighborhood of $y_0$.

Now let $B_R^\R(y_0)\subset\R^n$ be a, (real), ball of radius $R>0$, centered at $y_0$. By the submean-inequality for subharmonic functions, for some, (complex), ball $B_{\tilde{r}}^\C\subset\C^n$ of radius $\tilde{r}>0$, centered at the origin
{\setlength\arraycolsep{2pt}
\begin{eqnarray}
G(x+iy)&\leq&\frac{1}{|B_{\tilde{r}}^\C|}\int_{B^\C_{\tilde{r}}}G(z+w)dV(w)\leq\nonumber\\
&\leq&\frac{1}{|B_{\tilde{r}}^\C|}\int_{B^\R_{\tilde{r}}}\bigg(\int_{B^\R_{\tilde{r}}}G\big((x+s)+i(y+t)\big)dV(t)\bigg)dV(s)\nonumber
\end{eqnarray}}
\!\!where $|B_{\tilde{r}}^\C|$ denotes the Lebesgue measure of the ball, and we have used the fact that $B_{\tilde{r}}^\C\subset B_{\tilde{r}}^\R\times B_{\tilde{r}}^\R$. Hence by choosing $R>0$ such that $B_{\tilde{r}}^\R(y)\subset\bar{B}_R^\R(y_0)$, we get that
$$G(x+iy)\leq\frac{1}{|B_{\tilde{r}}^\C|}\int_{B^\R_{\tilde{r}}}\bigg(\int_{\bar{B}^\R_R}G\big((x+s)+i(y_0+t)\big)dV(t)\bigg)dV(s):=G_{y_0}(x).$$
By construction, then, (\ref{eq:dom}) holds. Furthermore, by using the Fubini-Tonelli theorem, (\ref{eq:lma}), and a linear change of variables, we also have that
$$\int_{\R^n}\!\!G_{y_0}(x)dV(x)\!=\!\frac{1}{|B_{\tilde{r}}^\C|}\!\int_{B^\R_{\tilde{r}}}\bigg(\int_{\R^n\times\bar{B}^\R_R}\!\!\!\!\!\!\!\!G\big((x+s)+i(y_0+t)\big)dV(x,t)\bigg)dV(s)\!<\!\infty.$$
\end{proof}

We apply this lemma with $G(x,y)=|F_j(x+iy)|^2$ for $j=1,\ldots,r$, which are subharmonic, (in fact plurisubharmonic), as $F$ is holomorphic. Also since the metric $g$ only depends on $y$, is strictly positive definite everywhere, and is continuous, it will be locally bounded from below. Hence, $F\in A^2(g)$ implies that
$$\int_{\R^n\times K}|F_j(x+iy)|^2dV(x,y)<\infty,$$
for all compact sets $K\subset\R^n$.

Thus, Lemma \ref{lma:4.1} yields that
$$\int_{\R^n}|F_j(x+iy)|^2dV(x)=\int_{\R^n}|F_{j,y}(x)|^2dV(x)$$
is continuous in $y$, (in particular finite), and so $F_y\in L^2(\R^n)$ for all $y\in\R^n$. This proves that $f(\xi):=\F^{-1}(F_0)(\xi)$ is well-defined.

It remains to show that $f\in L^2(\tilde{g})$. By (\ref{eq:Pars}) this will follow if we can establish that for all $\xi\in\R^n$,
\be\label{eq:main}
\F^{-1}(F_y)(\xi)=\F^{-1}(F_0)(\xi)e^{\xi\cdot y}=f(\xi)e^{\xi\cdot y}.
\ee
This is a bit tricky. An important observation is that (\ref{eq:main}) holds if and only if
$$\F^{-1}\big((F\circ T)_y\big)(\xi)=\F^{-1}\big((F\circ T)_0\big)(\xi)e^{\xi\cdot T(y)}$$
for any invertible linear mapping $T:\C^n\to\C^n$. This is straightforward to verify, and it is important since it allows us to choose coordinates so that $y=(y_1,0,\ldots,0)$, thereby reducing the proof of (\ref{eq:main}) to the one-dimensional case.

Now fix $x_2,\ldots,x_n\in\R$, let $z=x_1+iy_1\in\C$, $\xi\in\R^n$ and set
$$G_j(z)=F_j(z,x_2,\ldots,x_n)e^{i\xi_1z}e^{i\sum_{j=2}^n\xi_jx_j}.$$
Let $\gamma_1$ be the $x$-axis in $\C$, let $\gamma_2$ be an arbitrary horizontal line in $\C$, and let $S$ denote the strip between these two lines. Furthermore, let $\chi:\R\to\R$ be a cut-off function which is equal to one on the closed unit ball in $\R$, (i.e. $\chi\in C^{\infty}_c(\R)$, $0\leq\chi\leq1$ and $\chi=1$ on $[-1,1]$), and set $\chi_R(x):=\chi(x/R)$, with $R>0$. Then, by Stokes' theorem,
{\setlength\arraycolsep{2pt}
\begin{eqnarray}\label{eq:problem}
&&\int_{\gamma_1}\chi_R\big(\Re(z)\big)G_j(z)dz-\int_{\gamma_2}\chi_R\big(\Re(z)\big)G_j(z)dz=\\
&&=\!\!\int_{S}\!\frac{\partial}{\partial\bar{z}}\big(\chi_R\big(\Re(z)\big)G_j(z)\big)dV(z)\!=\!\!\!\int_{S}\!\bigg(\frac{\partial}{\partial\bar{z}}\!\chi_R\big(\Re(z)\big)\!\!\bigg)G_j(z)dV(z),\nonumber
\end{eqnarray}}
\!\!as $G_j$ is holomorphic.

Since the function $\phi_R(x):=\chi_R(x)-1$ is decreasing in $R$, by the monotone convergence theorem, $\phi_R\to0$ in $L^2(\R)$ as $R\to\infty$. At the same time, if we assume that $\gamma_2$ has the parametrization $x+iy$, with $x\in\R$ and $y>0$, then
{\setlength\arraycolsep{2pt}
\begin{eqnarray}
&&\int_{S}\Big|\frac{\partial}{\partial\bar{z}}\chi_R\big(\Re(z)\big)\Big|^2dV(z)=\frac{y}{4}\int_{\R}\Big|\frac{\partial}{\partial x}\chi_R(x)\Big|^2dx=\nonumber\\
&&\qquad\qquad\qquad\qquad\qquad=\frac{y}{4R^2}\int_{\R}\Big|\chi'(\frac{x}{R})\Big|^2dx=\frac{y}{4R}\int_{\R}|\chi'(t)|^2dt,\nonumber
\end{eqnarray}}
\!\!and so $(\chi_R)'_{\bar{z}}\to0$ in $L^2(S)$, as $R\to\infty$.

On the other hand, one can show that
\be\label{eq:GL2}
\int_\R\big|G_j(x_1+iy_1)\big|^2dx_1
\ee
is continuous in $y_1$, (in particular, $G_{j,y_1}\in L^2(\R)$ for any fix $y_1\in\R$). This follows from basically the same argument as in Lemma \ref{lma:4.1}, but this time using the $pluri$subharmonicity of $|F|^2$. The continuity of (\ref{eq:GL2}) then also implies that $G_j\in L^2(S)$.

Hence, letting $R\to\infty$ in (\ref{eq:problem}), these facts, together with the Cauchy-Schwarz inequality yield that,
$$\int_{\gamma_1}G(z)dz=\int_{\gamma_2}G(z)dz,$$
which in turn is equivalent to
$$\int_\R F(x_1+iy_1,x_2,\ldots,x_n)e^{i\xi\cdot x}e^{-\xi_1y_1}dx_1=\int_\R F(x_1,x_2,\ldots,x_n)e^{i\xi\cdot x}dx_1.$$
Integrating this identity with respect to $x_2,\ldots,x_n$, and choosing coordinates so that $y=(y_1,0,\ldots,0)$ as before, we, at last, get that
$$\F^{-1}(F_y)(\xi)e^{-\xi\cdot y}=\F^{-1}(F_0)(\xi),$$
which is what we wanted to show.

\section{The proof of Theorem \ref{thm:B}}
\noindent We now turn to the proof of Theorem \ref{thm:B}. The plan is to use the same approach as in the scalar valued case, (\cite{B} Theorem 1.1), namely the Griffiths subbundle formula and H\"ormander $L^2$-estimates for the $\dbar$-equation. However, since in the vector-valued case, these $L^2$-estimates differ a bit from the ones in the scalar-valued setting, the second part of the proof will be somewhat different from the proof in \cite{B}. 

As our main goal in this paper is the proof of Theorem \ref{thm:main}, we have chosen not to have a lengthy review of the basic concepts of (infinite rank) vector bundles. For this, we refer to section 2 of \cite{B}.

\begin{proof}[Proof of Theorem \ref{thm:B}]
Since the first part of the proof is rather similar to the corresponding argument in \cite{B}, our treatment of this part will be a little sketchy.

We want to show that given a hermitian metric $h:\C^n_z\times\C^m_w\to\C^{r\times r}$, which is positively curved in the sense of Nakano, the trivial, (infinite rank), holomorphic vector bundle $(E,\|\cdot\|_w)$ over $\C^m$, defined through $E_w:=A^2_w(h)$ as in (\ref{eq:fiber}), is Nakano positive as well.

We begin by observing that $E$, is a subbundle of the trivial, holomorphic vector bundle $(G,\|\cdot\|_w)$, where
$$G_w:=L^2_w(h)=\{f\in L^2(\C^n;\C^r):\|f\|^2_w:=\int_{\C^n}\|f(z)\|^2_{h_w(z)}dV(z)<\infty\}.$$
By the Griffiths subbundle formula (see \cite{B} section 2)
\be\label{eq:Gsub}
\sum_{j,k=1}^m(\Theta^G_{jk}u_j,u_k)=\Big\|\pi_\perp\sum_{j=1}^mD^G_{w_j}u_j\Big\|^2+\sum_{j,k=1}^m(\Theta^E_{jk}u_j,u_k).
\ee
where $\{u_j\}_{j=1}^m$ is an $m$-tuple of smooth sections of $E$, (i.e. $u_j$ is smooth in $w$ and holomorphic in $z$), $D$ and $\Theta$ denote the Chern connection and the curvature, and $\pi_\perp$ is the orthogonal projection on the orthogonal complement of $E$ (in $G$).

It is then straightforward to verify that $\Theta^G_{jk}=\Theta_{jk}:=\dbar_{w_k}(h^{-1}\partial_{w_j}h)$, so the Nakano positivity of $h$ implies that the left hand side of (\ref{eq:Gsub}) is positive. Hence we need to estimate the term
$$\Big\|\pi_\perp\sum_{j=1}^mD^G_{w_j}u_j\Big\|^2,$$
from above. The key observation for this is that if we set
$$v:=\pi_\perp\sum_{j=1}^mD^G_{w_j}u_j,$$
then $v$ can be regarded as a solution to the $\dbar$-equation,
\be\label{eq:dbar}
\dbar_zv=\sum_{\lambda=1}^n\sum_{j=1}^m\Theta_{j\lambda}u_jd\bar{z}_\lambda=:f,
\ee
where $\Theta_{j\lambda}=\dbar_{z_\lambda}(h^{-1}\partial_{w_j}h)$, (as the $u_j$:s are holomorphic in $z$), and $\{dz_\lambda\}_{\lambda=1}^n$ denotes an orthonormal basis for the cotangent space in the fiber direction, (orthonormal with respect to the scalar product, $(\cdot,\cdot)$, on norms induced by $h$ and the K\"ahler form $\omega$). Furthermore, since $v$ lies in the orthogonal complement of $E$, $v$ is the minimal solution to this equation. Thus, we can apply H\"ormander type $L^2$-estimates for the $\dbar$-equation to $v$.

Up until this point, we have followed the argument in \cite{B} closely. However, since the $L^2$-estimates for sections of a vector bundle and a line bundle are slightly different, the rest of the proof will be somewhat different as well.

The H\"ormander type $L^2$-estimates for (\ref{eq:dbar}) are, (see e.g. \cite{D}, Chapter VIII, Theorem 4.6, the notation used here is nonstandard and will be replaced shortly),
\be\label{eq:estimate}
\|v\|^2\leq\big(B^{-1}f,f\big)
\ee
where $B^{-1}$ is the dual of the operator,
$$B\big(\sum_{\lambda=1}^n\alpha_\lambda d\bar{z}_\lambda\big):=\sum_{\lambda,\mu=1}^n\Theta_{\lambda\mu}\alpha_\lambda d\bar{z}_\mu.$$

Combining (\ref{eq:estimate}) with (\ref{eq:Gsub}), we get that
$$\sum_{j,k=1}^m(\Theta^E_{jk}u_j,u_k)\geq\sum_{j,k=1}^m(\Theta_{jk}u_j,u_k)-\big(B^{-1}f,f\big).$$
Now by definition
$$\big(B^{-1}f,f\big)=\sup_{\alpha_1,\ldots,\alpha_n}\frac{\Big|\sum_{\lambda=1}^n(\alpha_\lambda,f_\lambda)\Big|^2}{\sum_{\lambda,\mu=1}^n(\Theta_{\lambda\mu}\alpha_\lambda,\alpha_\mu)}$$
for every $n$-tuple $\{\alpha_\lambda\}_{\lambda=1}^n$. Hence, we get that $E$ is positively curved in the sense of Nakano if for all $n$-tuples $\{\alpha_\lambda\}_{\lambda=1}^n$,
\be\label{eq:Cauchy}
\Big|\sum_{\lambda=1}^n\Big(\alpha_\lambda,\sum_{j=1}^m\Theta_{j\lambda}u_j\Big)\Big|^2\leq\Big(\sum_{j,k=1}^m(\Theta_{jk}u_j,u_k)\Big)\Big(\sum_{\lambda,\mu=1}^n(\Theta_{\lambda\mu}\alpha_\lambda,\alpha_\mu)\Big).
\ee

In order to make the proof of this inequality more transparent, we will now reformulate everything in the language of differential forms (thereby replacing $B$ with $[i\Theta,\Lambda]$). Let $\{dw_j\}_{j=1}^m$ denote an orthonormal basis for the cotangent space in the base direction.

We let $\Theta$ denote the 'total' curvature
{\setlength\arraycolsep{2pt}
\begin{eqnarray}
\Theta&=&\sum_{j,k=1}^m\Theta_{jk}dw_j\wedge d\bar{w}_k+\sum_{j=1}^m\sum_{\mu=1}^n\Theta_{j\mu}dw_j\wedge d\bar{z}_\mu+\nonumber\\
&&+\sum_{\lambda=1}^n\sum_{k=1}^m\Theta_{\lambda k}dz_\lambda\wedge d\bar{w}_k+\sum_{\lambda,\mu=1}^n\Theta_{\lambda\mu}dz_\lambda\wedge d\bar{z}_\mu.\nonumber
\end{eqnarray}}
\!\!Also, we let $\Lambda$ denote the adjoint of the operator that sends $(p,q)$-forms to $(p+1,q+1)$-forms through wedging with the K\"ahler form
$$\omega=i\sum_{j=1}^mdw_j\wedge d\bar{w}_j+i\sum_{\lambda=1}^ndz_\lambda\wedge d\bar{z}_\lambda.$$
Finally, given the $m$- and $n$-tuples $\{u_j\}_{j=1}^m$ and $\{\alpha_\lambda\}_{\lambda=1}^n$, we use them to create the vector-valued $(n+m,1)$-forms
$$u=\sum_{j=1}^mu_jdw\wedge dz\wedge d\bar{w}_j,$$
and
$$\alpha=\sum_{\lambda=1}^n\alpha_\lambda dw\wedge dz\wedge d\bar{z}_\lambda,$$
where $dw=dw_1\wedge\ldots\wedge dw_m$ and $dz=dz_1\wedge\ldots\wedge dz_n$.

A short computation now yields that
$$\Lambda u=i(-1)^{m+n}\sum_{j=1}^mu_j\widehat{dw_j}\wedge dz,$$
and
$$\Lambda\alpha=i(-1)^n\sum_{\lambda=1}^n\alpha_\lambda dw\wedge\widehat{dz_\lambda},$$
where $\widehat{dw_j}$ denotes the wedge product of all differentials $dw_k$ except $dw_j$, ordered so that $dw_j\wedge\widehat{dw_j}=dw$, and similarly for $\widehat{dz_\lambda}$. Using these, we can now calculate
$$i\Theta\wedge\Lambda u=\sum_{j,k=1}^m\Theta_{jk}u_jdw\wedge dz\wedge d\bar{w}_k+\sum_{\mu=1}^n\Big(\sum_{\mu=1}^m\Theta_{j\mu}u_j\Big)dw\wedge dz\wedge d\bar{z}_\mu,$$
and
$$i\Theta\wedge\Lambda\alpha=\sum_{k=1}^m\Big(\sum_{\lambda=1}^n\Theta_{\lambda k}\alpha_\lambda\Big)dw\wedge dz\wedge d\bar{w}_k+\sum_{\lambda,\mu=1}^n\Theta_{\lambda\mu}\alpha_\lambda dw\wedge dz\wedge d\bar{z}_\mu.$$
As $u$ and $\alpha$ are $(n+m,1)$-forms, it is immediate that $\Theta\wedge u=\Theta\wedge\alpha=0$. Hence, if we let $[i\Theta,\Lambda]$ denote the commutator between $i\Theta$ and $\Lambda$, we see that
\be\label{eq:u}
\big([i\Theta,\Lambda]u,u\big)=\sum_{j,k=1}^m(\Theta_{jk}u_j,u_k),
\ee
\be\label{eq:a}
\big([i\Theta,\Lambda]\alpha,\alpha\big)=\sum_{\lambda,\mu=1}^n(\Theta_{\lambda\mu}\alpha_\lambda,\alpha_\mu),
\ee
and
$$\big([i\Theta,\Lambda]u,\alpha\big)=\sum_{\lambda=1}^n\Big(\sum_{j=1}^m\Theta_{j\lambda}u_j,\alpha_\lambda\Big),$$
where on the left hand sides we, once again, use $(\cdot,\cdot)$ to denote the scalar product on norms induced by $h$ and the K\"ahler form $\omega$.

Altogether we see that, reformulated in terms of differential forms, (\ref{eq:Cauchy}) is equivalent to,
\be\label{eq:CauchyDiff}
\Big|\big([i\Theta,\Lambda]u,\alpha\big)\Big|^2\leq\big([i\Theta,\Lambda]u,u\big)\big([i\Theta,\Lambda]\alpha,\alpha\big).
\ee
Proving this inequality, however, is much easier than (\ref{eq:Cauchy}), since the Nakano positivity of $\Theta$ implies that $[i\Theta,\Lambda]$ is a positive operator on the space of vector-valued $(n+m,1)$-forms. (We have shown this for our special forms $u$ and $\alpha$ in (\ref{eq:u})-(\ref{eq:a}); the general case can be proven similarly, see e.g. \cite{D}, Chapter VII, Lemma 7.2.) In particular, this means that $([i\Theta,\Lambda]\cdot,\cdot)$ defines a metric on this vector space, with (\ref{eq:CauchyDiff}) just being the usual Cauchy-Schwarz inequality for this metric. Thus, (\ref{eq:Cauchy}) holds for any $n$-tuple $\{\alpha_\lambda\}_{\lambda=1}^n$ and Theorem \ref{thm:B} is proved.
\end{proof}

\begin{remark}
We have chosen to prove Theorem \ref{thm:B} for hermitian metrics on a domain $D=\Omega\times U$ in $\C^n_z\times\C^m_w$, since this setting is precisely what we need for Theorem \ref{thm:main}. The proof, however, can be adapted to the following setting.

Let $Y$ be a connected, complex manifold of dimension $m$, and let $Z$ be a compact, complex $n$-dimensional manifold. Then $X=Y\times Z$ is a complex manifold of dimension $n+m$ which can be regarded as a trivial fibration, $p:X\to Y$, over $Y$, with compact fibers $p^{-1}(w)=:X_w\cong Z$.

Let $(G,h)$ be a holomorphic, hermitian vector bundle over $X$, and let $K_Z$ denote the canonical bundle of $Z$, i.e. the bundle of forms of bidegree $(n,0)$. The Bergman spaces $A^2_w$ then get replaced by the space of global sections
$$E_w:=\Gamma\big(Z,G|_{X_w}\otimes K_Z\big),$$
where we have written $G|_{X_w}$, instead of $G|_Z$, to stress that the metric $h$ on $G$ depends on the base point $w$.

Now if $X$ is K\"ahler, then just as in the setting Theorem 1.2 in \cite{B}, $E$ has a natural structure as a holomorphic vector bundle. Moreover, as elements of $E_w$ can be integrated over the fiber $X_w=Z$, (with respect to the metric $h_w$ and the K\"ahler form on $X$), we also have a natural, nontrivial, metric $\|\cdot\|_w$ on $E$.

Adapted to this setting, the proof of Theorem \ref{thm:B} yields the following theorem.
\begin{thm}
If $X$ is K\"ahler and $(G,h)$ is positively curved in the sense of Nakano over $X$, then $(E,\|\cdot\|_w)$ is also positively curved in the sense of Nakano.
\end{thm}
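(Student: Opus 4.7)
The strategy is to transplant the proof of Theorem~\ref{thm:B} to the fibered Kähler setting, with two adaptations: the trivial ambient bundle is replaced by a bundle of $L^2$ sections of $G|_{X_w}\otimes K_Z$ over the compact fiber, and the Hörmander estimate is replaced by the Bochner--Kodaira--Nakano inequality on the compact Kähler manifold $Z$ with values in the Nakano-positive bundle $G|_{X_w}$. The twist by $K_Z$ is what allows us to turn global holomorphic sections into $G$-valued $(n,0)$-forms on $Z$, so that the standard Nakano machinery applies cleanly.

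First I would embed $E$ as a holomorphic subbundle of a larger (possibly of infinite rank) hermitian bundle $\mathcal{G}$ over $Y$ whose fiber $\mathcal{G}_w$ consists of all $L^2$-sections of $G|_{X_w}\otimes K_Z$ with the inner product induced by $h_w$ and the Kähler form on $X$. Since $Z$ is compact and $h$ depends smoothly on $w$, the $\|\cdot\|_w$ are locally mutually equivalent, so $\mathcal{G}$ is locally trivial and its Chern curvature is computed fiberwise by the same formula $\Theta^{\mathcal{G}}_{jk}=\dbar_{w_k}(h^{-1}\partial_{w_j}h)$ that appeared in the proof of Theorem~\ref{thm:B}. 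Consequently, Nakano positivity of $h$ in the $w$-variables transfers directly to $\mathcal{G}$.

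Next, for an $m$-tuple $\{u_j\}_{j=1}^m$ of smooth sections of $E$ (smooth in $w$, holomorphic along the fiber) I would invoke the Griffiths subbundle formula
\[
\sum_{j,k=1}^m(\Theta^E_{jk}u_j,u_k)=\sum_{j,k=1}^m(\Theta^{\mathcal{G}}_{jk}u_j,u_k)-\Big\|\pi_\perp\sum_{j=1}^m D^{\mathcal{G}}_{w_j}u_j\Big\|^2,
\]
and observe, exactly as in the proof of Theorem~\ref{thm:B}, that $v:=\pi_\perp\sum_j D^{\mathcal{G}}_{w_j}u_j$ is the minimal $L^2$-solution of the fiberwise $\dbar$-equation $\dbar_z v=f$ with $f=\sum_{j,\lambda}\Theta_{j\lambda}u_j\,d\bar z_\lambda$, now interpreted as a $G\otimes K_Z$-valued $(0,1)$-form on $Z$, or equivalently as a $G$-valued $(n,1)$-form on $Z$.

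The main technical step, and the one I expect to be the real obstacle, is establishing the $L^2$-estimate $\|v\|^2\le([i\Theta,\Lambda]^{-1}f,f)$ in this setting. This is where compactness of the fiber is essential: on a compact Kähler manifold, Bochner--Kodaira--Nakano for a Nakano-positive bundle gives an $L^2$-estimate of exactly this form for the minimal solution of $\dbar$, and the twist by $K_Z$ (implicit in our identification of sections with $(n,0)$-forms) is what lets us apply it. Granted that estimate, one finishes by exactly the argument at the end of the proof of Theorem~\ref{thm:B}: recasting the inequality in the language of differential forms on $X$, reducing it to
\[
|([i\Theta,\Lambda]u,\alpha)|^2\le([i\Theta,\Lambda]u,u)\,([i\Theta,\Lambda]\alpha,\alpha),
\]
and observing that the Nakano positivity of the total curvature $\Theta$ makes $([i\Theta,\Lambda]\cdot,\cdot)$ a genuine inner product on vector-valued $(n+m,1)$-forms, so the inequality is just Cauchy--Schwarz for that inner product. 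This yields Nakano positivity of $(E,\|\cdot\|_w)$.
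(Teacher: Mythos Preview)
Your proposal is correct and follows exactly the approach the paper indicates: the paper does not give a separate proof of this theorem but states that the proof of Theorem~\ref{thm:B} adapts to the compact-fiber setting, and you have carried out precisely that adaptation. The key modifications you identify---replacing the ambient $L^2$-bundle over a pseudoconvex domain by the bundle of $L^2$-sections of $G|_{X_w}\otimes K_Z$, using the twist by $K_Z$ to view sections as $(n,0)$-forms so that the Bochner--Kodaira--Nakano inequality applies on the compact K\"ahler fiber, and then running the Griffiths subbundle formula plus the Cauchy--Schwarz argument for $([i\Theta,\Lambda]\cdot,\cdot)$ verbatim---are exactly the ones implicit in the paper's remark.
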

A much more general version of this theorem has previously been obtained by Mourougane and Takayama, \cite{MT} Theorem 1.1, (see also \cite{LY}).
\end{remark}

\section{The proof of Theorem \ref{thm:main}}
\noindent With Theorems \ref{thm:PW} and \ref{thm:B} at our disposal, we can finally turn to the proof of our matrix-valued Pr\'ekopa theorem.

\begin{proof}[Proof of Theorem \ref{thm:main}] Given the metric $g:\R^n_y\times\R^m_t\to\C^{r\times r}$, we can think of it as a metric $h:\C^n_z\times\C^m_w\to\C^{r\times r}$ which is independent of the real parts of $z=x+iy$ and $w=s+it$, i.e. $h(z,w)=g(y,t)$. Then $h$ will be positively curved in the sense of Nakano, since $g$ is log concave in the sense of Nakano by assumption. Thus we are in the setting of Theorem \ref{thm:B} and so we know that the trivial, (infinite rank) holomorphic vector bundle $(E,\|\cdot\|_{w,h})$ over $\C^m$, with fiber $E_w=A^2_w(h)$ as in (\ref{eq:fiber}), is positively curved in the sense of Nakano as well.

As sketched at the end of the introduction, we now use $h$ to define the metric $\tilde{h}:\R^n_\xi\times\C^m_w\to\C^{r\times r}$ through
\be\label{eq:herm_pre1}
\tilde{h}(\xi,w)=(2\pi)^n\int_{\R^n}e^{2\xi\cdot y}h(y,w)dV(y).
\ee
Since $h(z,w)$ is hermitian and independent of the real parts of $z$ and $w$, the same thing holds for $\tilde{h}$. In fact, comparing with (\ref{eq:vb_pre}) we see that $\tilde{h}(\xi,w)=\tilde{g}(\xi,t)$.

From (\ref{eq:herm_pre1}) it follows that for each fix $w\in\C^m$, $\tilde{h}_w(\cdot):=\tilde{h}(\cdot,w)$ is a hermitian metric on $\R^n$, and so in this way we can construct a second trivial, (infinite rank), holomorphic vector bundle $(\tilde{E},\|\cdot\|_{w,\tilde{h}})$ over $\C^m$, with fiber $\tilde{E}_w:=L^2_w(\tilde{h})$ as in (\ref{eq:L2}).

Theorem \ref{thm:PW}, and (\ref{eq:parseval}) in particular, then yield that the vector bundles $(E,\|\cdot\|_{w,h})$ and $(\tilde{E},\|\cdot\|_{w,\tilde{h}})$ are isometrically isomorphic. Hence, since $(E,\|\cdot\|_{w,h})$ is positively curved in the sense of Nakano, so is $(\tilde{E},\|\cdot\|_{w,\tilde{h}})$.

Let $f$ and $k$ denote two sections of $\tilde{E}$, so that for each fix $w\in\C^m$, $f_w,k_w\in L^2_w(\tilde{h})$. As we are investigating a pointwise property, (namely being curved in the sense of Nakano), we can without any loss of generality assume that $f$ and $k$ both are holomorphic in $w$. The Chern connection of $(\tilde{E},\|\cdot\|_{w,\tilde{h}})$ is given by $D=\nabla+\dbar$, where $\nabla$ is defined through $(\nabla f,k)_{w,\tilde{h}}:=\partial(f,k)_{w,\tilde{h}}$. Hence a short computation yields that
{\setlength\arraycolsep{2pt}
\begin{eqnarray}
(\nabla f,k)_{w,\tilde{h}}&=&\partial_w\int_{\R^n}\big(f_w(\xi),k_w(\xi)\big)_{\tilde{h}_w(\xi)}dV(\xi)=\nonumber\\
&&\qquad\qquad\qquad=\int_{\R^n}\big(D_{w,\xi}'f_w(\xi),k_w(\xi)\big)_{\tilde{h}_w(\xi)}dV(\xi),\nonumber
\end{eqnarray}}
\!\!where $D_{w,\xi}':=\partial_w+\tilde{h}^{-1}(\xi,w)\partial_w\tilde{h}(\xi,w)$.

For the curvature of $(\tilde{E},\|\cdot\|_{w,\tilde{h}})$, an equally short computation gives that
{\setlength\arraycolsep{2pt}
\begin{eqnarray}
&&(\Theta^{\tilde{E}}f,k)_{w,\tilde{h}}:=(D^2f,k)_{w,\tilde{h}}=\big((\nabla\dbar+\dbar\nabla)f,k\big)_{w,\tilde{h}}=(\dbar\nabla f,k)_{w,\tilde{h}}=\nonumber\\
&&\quad=\int_{\R^n}\big(\dbar_wD_{w,\xi}'f_w(\xi),k_w(\xi)\big)_{\tilde{h}}dV(\xi)=\int_{\R^n}\big(\Theta^{\tilde{h}}_{\xi,w}f_w(\xi),k_w(\xi)\big)_{\tilde{h}}dV(\xi),\nonumber
\end{eqnarray}}
\!\!where $\Theta^{\tilde{h}}_{\xi,w}:=\Theta^{\tilde{h}}(\xi,w)=\dbar_w\big(\tilde{h}^{-1}(\xi,w)\partial_w\tilde{h}(\xi,w)\big)$. In particular, this shows that
$$(\Theta^{\tilde{E}}_{jl}f,k)_{w,\tilde{h}}=\int_{\R^n}\big(\Theta^{\tilde{h}}_{jl}(\xi,w)f_w(\xi),k_w(\xi)\big)_{\tilde{h}}dV(\xi),$$
for all $j,l=1,\ldots,m$.

By definition, $\tilde{E}$ being positively curved in the sense of Nakano means that for any $m$-tuple $\{f^j\}_{j=1}^m$ of sections of $\tilde{E}$,
$$\sum_{j,l=1}^m\big(\Theta^{\tilde{E}}_{jl}f^j,f^l\big)_{w,\tilde{h}}\geq0.$$
Thus, from what we have just seen this implies that
\be\label{eq:h_Nak}
\int_{\R^n}\sum_{j,l=1}^m\big(\Theta^{\tilde{h}}_{jl}(\xi,w)f^j_w(\xi),f^l_w(\xi)\big)_{\tilde{h}_w(\xi)}dV(\xi)\geq0.
\ee

Now as $\{f^j\}_{j=1}^m$ are arbitrary sections of $\tilde{E}$, we can multiply each $f^j$ with a cut-off function $\chi(\xi)$, since if $f^j_w(\xi)\in L^2_w(\tilde{h})$ for each $w$, so is $\chi(\xi)f^j_w(\xi)$. But this means that we without any loss of generality can replace (\ref{eq:h_Nak}) with
$$\int_{\R^n}\sum_{j,l=1}^m\big(\Theta^{\tilde{h}}_{jl}(\xi,w)f^j_w(\xi),f^l_w(\xi)\big)_{\tilde{h}_w(\xi)}\chi^2(\xi)dV(\xi)\geq0.$$
As $\chi$ is arbitrary, we get that
$$\sum_{j,l=1}^m\big(\Theta^{\tilde{h}}_{jl}(\xi,w)f^j_w(\xi),f^l_w(\xi)\big)_{\tilde{h}_w(\xi)}\geq0,$$
for all $\xi\in\R^n$.

What we have shown is that
$$\tilde{h}(\xi,w)=(2\pi)^n\int_{\R^n}e^{2\xi\cdot y}h(y,w)dV(y)$$
is Nakano positive in $w$. In particular, since we have that $\tilde{h}(\xi,w)=\tilde{g}(\xi,t)$ and $h(z,w)=g(y,t)$, this means that for all $\xi\in\R^n$,
$$\tilde{g}(\xi,t)=(2\pi)^n\int_{\R^n}e^{2\xi\cdot y}g(y,t)dV(y),$$
is log concave in the sense of Nakano in $t$. Thus choosing $\xi=0$ finishes the proof of the theorem.
\end{proof}

\begin{remark}
Just as for the original Pr\'ekopa theorem, it is straightforward to extend Theorem \ref{thm:main} to integration over arbitrary convex sets $K$ in $\R^n$. Namely, let $\phi_K:K\to\R\cup\{\infty\}$ be the function
$$\phi_K(x):=\left\{ \begin{array}{ll}
0 & \textrm{if $x\in K$,}\\
\infty & \textrm{otherwise.}
\end{array} \right.$$
Then, $\phi_K$ can be written as the limit of an increasing sequence of convex functions $\{\phi_j\}_{j=1}^\infty$.

If  $g:\R^n_y\times\R^m_t\to\C^{r\times r}$ is a metric that is log concave in the sense of Nakano, then by arguing as in example \ref{ex:2} and \ref{ex:3} of section 2, we have that for each $j$, the metric $e^{-\phi_j}g$ is also Nakano log concave. Hence, by Theorem \ref{thm:main},
$$\int_{\R^n}e^{-\phi_j(y,t)}g(y,t)dV(y)$$
is log concave in the sense of Nakano.

Letting $j\to\infty$ and using monotone convergence, we thus get that
$$\tilde{g}(t):=\int_Kg(y,t)dV(y),$$
is log concave in the sense of Nakano as well.
\end{remark}

\begin{bibdiv}
\begin{biblist}

\bib{B}{article}{
   author={Berndtsson, Bo},
   title={Curvature of vector bundles associated to holomorphic fibrations},
   journal={Ann. of Math. (2)},
   volume={169},
   date={2009},
   number={2},
   pages={531--560},
}

\bib{BL}{article}{
   author={Brascamp, Herm Jan},
   author={Lieb, Elliott H.},
   title={On extensions of the Brunn-Minkowski and Pr\'ekopa-Leindler
   theorems, including inequalities for log concave functions, and with an
   application to the diffusion equation},
   journal={J. Functional Analysis},
   volume={22},
   date={1976},
   number={4},
   pages={366--389},
}

\bib{D}{article}{
   author={Demailly, J-P.},
   title={Complex Analytic and Differential Geometry},
   status={Monograph},
   eprint={http://www-fourier.ujf-grenoble.fr/~demailly},
}

\bib{DS}{article}{
   author={Demailly, J.-P.},
   author={Skoda, H.},
   title={Relations entre les notions de positivit\'es de P. A. Griffiths et
   de S. Nakano pour les fibr\'es vectoriels},
   language={French},
   conference={
      title={S\'eminaire Pierre Lelong-Henri Skoda (Analyse). Ann\'ees
      1978/79 (French)},
   },
   book={
      series={Lecture Notes in Math.},
      volume={822},
      publisher={Springer},
      place={Berlin},
   },
   date={1980},
   pages={304--309},
}

\bib{G}{article}{
   author={Genchev, T. G.},
   title={Paley-Wiener type theorems for functions in Bergman spaces over
   tube domains},
   journal={J. Math. Anal. Appl.},
   volume={118},
   date={1986},
   number={2},
   pages={496--501},
}

\bib{H}{article}{
   author={Hsin, Ching-I},
   title={The Bergman kernel on tube domains},
   journal={Rev. Un. Mat. Argentina},
   volume={46},
   date={2005},
   number={1},
   pages={23--29 (2006)},
}

\bib{L}{book}{
   author={Lax, Peter D.},
   title={Linear algebra and its applications},
   edition={2},
   publisher={Wiley-Interscience [John Wiley \& Sons]},
   date={2007},
   pages={xvi+376},
   isbn={978-0-471-75156-4},
}

\bib{LY}{article}{
   author={Liu, Kefeng},
   author={Yang, Xiaokui},
   title={Curvatures of direct image sheaves of vector bundles and applications},
   date={2013},
   status={Preprint},
   eprint={arXiv:1303.2701 [math.CV]},
   url={http://arxiv.org/abs/1303.2701}
}

\bib{MT}{article}{
   author={Mourougane, Christophe},
   author={Takayama, Shigeharu},
   title={Hodge metrics and the curvature of higher direct images},
   journal={Ann. Sci. \'Ec. Norm. Sup\'er. (4)},
   volume={41},
   date={2008},
   number={6},
   pages={905--924},
}

\bib{P}{article}{
   author={Pr{\'e}kopa, Andr{\'a}s},
   title={On logarithmic concave measures and functions},
   journal={Acta Sci. Math. (Szeged)},
   volume={34},
   date={1973},
   pages={335--343},
}

\bib{SW}{book}{
   author={Stein, Elias M.},
   author={Weiss, Guido},
   title={Introduction to Fourier analysis on Euclidean spaces},
   note={Princeton Mathematical Series, No. 32},
   publisher={Princeton University Press},
   place={Princeton, N.J.},
   date={1971},
   pages={x+297},
}

\end{biblist}
\end{bibdiv}
\end{document}